\pgfplotsset{compat=1.13}
\definecolor{blue}{rgb}{0.25,0.25,0.75}
\colorlet{someblue}{blue!70}
\colorlet{lightblue}{blue!55}
\colorlet{verylightblue}{blue!30}
\definecolor{greyblue}{rgb}{0.36,0.36,0.5}
\colorlet{lightgreyblue}{greyblue!55}
\colorlet{red}{red!90!black}
\colorlet{lightred}{red!65}
\colorlet{verylightred}{red!40}
\colorlet{green}{green!80!black}
\colorlet{lightgreen}{green!65}
\colorlet{verylightgreen}{green!20}
\colorlet{lightgray}{gray!65}
\DeclareMathOperator*{\esssup}{ess\, sup}
\newcommand{\RP}{\mathbf{RP}}
\newcommand{\RPapprox}{\widetilde{\RP}}
\newcommand{\SP}[1]{\mathbf{SP}[#1]}
\newcommand{\SPapprox}[1]{\widetilde{\mathbf{SP}}[#1]}
\newcommand{\SPopt}[1]{\mathbf{H}^\star[#1]}
\newcommand{\SPapproxoptH}[1]{\widetilde{\mathbf{H}}^\star[#1]}
\newcommand{\SPapproxoptd}[1]{\greekbf{\delta}^\star[#1]}
\newcommand{\Wapprox}[1]{\setbf{W}[#1]}
\newcommand{\RPlin}[1]{\overbar{\RP}[#1]}
\newcommand{\RPlinoptH}[1]{\overbar{\mathbf{H}}^\star[#1]}
\newcommand{\RPlinoptd}[1]{\overbar{\greekbf{\delta}}^\star[#1]}
\newcommand{\slimcdot}{\!\cdot\!}
\begin{document}
\title{From Uncertainty Data to Robust Policies for Temporal Logic Planning}


\author{Pier Giuseppe Sessa}
\affiliation{%
 \institution{Automatic Control Laboratory, ETH Zurich}
 \streetaddress{Physikstrasse 3}
 \city{Zurich} 
 \state{Switzerland} 
 \postcode{CH-8092}
}
\email{sessap@student.ethz.ch}
\orcid{0000-0001-8986-8815}
\authornote{These authors contributed equally to this work.}

\author{Damian Frick}
\affiliation{%
 \institution{Automatic Control Laboratory, ETH Zurich}
 \streetaddress{Physikstrasse 3}
 \city{Zurich} 
 \state{Switzerland} 
 \postcode{CH-8092}
}
\email{dafrick@control.ee.ethz.ch}
\orcid{0000-0002-9355-4189}
\authornotemark[1]

\author{Tony A. Wood}
\affiliation{%
 \institution{Automatic Control Laboratory, ETH Zurich}
 \streetaddress{Physikstrasse 3}
 \city{Zurich} 
 \state{Switzerland} 
 \postcode{CH-8092}
}
\email{woodt@control.ee.ethz.ch}
\orcid{0000-0002-3928-3306}

\author{Maryam Kamgarpour}
\affiliation{%
 \institution{Automatic Control Laboratory, ETH Zurich}
 \streetaddress{Physikstrasse 3}
 \city{Zurich} 
 \state{Switzerland} 
 \postcode{CH-8092}
}
\email{mkamgar@control.ee.ethz.ch}
\orcid{0000-0003-0230-3518}
\authornote{The work of M. Kamgarpour is gratefully supported by Swiss National Science Foundation, under the grant SNSF 200021\_172782}

\begin{abstract}
	We consider the problem of synthesizing robust disturbance feedback policies for systems performing complex tasks. We formulate the tasks as linear temporal logic specifications and encode them into an optimization framework via mixed-integer constraints. Both the system dynamics and the specifications are known but affected by uncertainty. The distribution of the uncertainty is unknown, however realizations can be obtained. We introduce a data-driven approach where the constraints are fulfilled for a set of realizations and provide probabilistic generalization guarantees as a function of the number of considered realizations. We use separate chance constraints for the satisfaction of the specification and operational constraints. This allows us to quantify their violation probabilities independently. We compute disturbance feedback policies as solutions of mixed-integer linear or quadratic optimization problems. By using feedback we can exploit information of past realizations and provide feasibility for a wider range of situations compared to static input sequences. We demonstrate the proposed method on two robust motion-planning case studies for autonomous driving.
\end{abstract}

%
%
\begin{CCSXML}
<ccs2012>
<concept>
<concept_id>10002950.10003714.10003716.10011141</concept_id>
<concept_desc>Mathematics of computing~Mixed discrete-continuous optimization</concept_desc>
<concept_significance>500</concept_significance>
</concept>
<concept>
<concept_id>10003752.10003809.10003716.10011138.10010046</concept_id>
<concept_desc>Theory of computation~Stochastic control and optimization</concept_desc>
<concept_significance>500</concept_significance>
</concept>
<concept>
<concept_id>10003752.10010070.10010071.10010072</concept_id>
<concept_desc>Theory of computation~Sample complexity and generalization bounds</concept_desc>
<concept_significance>500</concept_significance>
</concept>
</ccs2012>
\end{CCSXML}

\ccsdesc[500]{Theory of computation~Sample complexity and generalization bounds}
\ccsdesc[500]{Theory of computation~Stochastic control and optimization}
\ccsdesc[500]{Mathematics of computing~Mixed discrete-continuous optimization}

\keywords{Temporal logic, Data-driven, Robust mixed-integer optimization, Disturbance feedback}

\maketitle

\sloppy
\section{Introduction}
Increased automation in transportation, energy systems and industrial manufacturing necessitates autonomously performing increasingly complex tasks. 
These tasks have to be completed safely and reliably despite the presence of uncertainties.
Linear temporal logic (LTL) \cite{pnueli1977} is a formal language that enables specifying such complex tasks as a combination of simpler tasks. To achieve this, LTL combines propositional logic with temporal operators.
In this paper we develop control policies for dynamical systems and specifications affected by uncertainty. These policies are computed from samples of the uncertainty and we provide probabilistic robustness guarantees.

For a given specification and dynamical system, model checking tools can be used to synthesize hybrid controllers based on a finite-state abstraction, which bisimulates the original continuous system \cite{fainekos2005b,tabuada2006,gol2015}. 
Mixed-integer programming was proposed more recently in \cite{karaman2008,wolff2014} for synthesizing trajectories that satisfy LTL specifications when the system dynamics are discrete-time linear or mixed logical dynamical (MLD) \cite{bemporad1999}. This approach takes advantage of mixed-integer optimization tools, instead of constructing a possibly large discrete abstraction.

Control strategies that ensure robust satisfaction of LTL specifications have been explored in the context of \emph{reactive planning}. In this framework, the uncertain environment behavior and the task are described by a joint specification. 
In \cite{kress-gazit2009} this is used to synthesize controllers that achieve the specified tasks for all possible environment behaviors. However, these behaviors are encoded purely in the specification and therefore disturbances that are correlated in time cannot be incorporated.
A more general description of the uncertainty is considered in \cite{frick2017b}, allowing for dynamic disturbances. A robust optimization problem is solved to obtain policies that ensure the satisfaction of the specification for all possible realizations of the uncertainties.
Signal temporal logic (STL), an alternative specification language that captures robustness, is used in \cite{raman2015}. where they iteratively construct a finite set of worst-case realizations and find robust trajectories for this set, however no robustness guarantees for the original problem are given.

Methods dealing with the probabilistic nature of uncertainties affecting the dynamical system or the specification have also been investigated. 
For discrete state spaces, uncertain Markov decision processes (MDPs) with general LTL specifications are considered in \cite{ding2011,wolff2012} with the goal of maximizing the probability of satisfying the specification.
MDPs are also used with a probabilistic specification language in \cite{lahijanian2012}.
In \cite{kamgarpour2013} a subset of LTL specifications is combined with stochastic hybrid systems, giving rise to a stochastic reachability problem. This is extended to include probabilistic uncertainties in the location of goal and obstacle sets in \cite{kamgarpour2017}.
However, these approaches require at least partial a-priori knowledge of the uncertainty distribution and do not consider performance criteria other than maximizing the probability of satisfying the specification.
Probabilistic STL specifications with Gaussian distributed uncertainties entering linearly in the atomic propositions are considered in \cite{sadigh2016}. The specification constraint is written as a chance constraint giving rise to a mixed-integer semi-definite program. Considering only Gaussan distributions enables to provide probabilistic guarantees for the satisfaction of the specification.
Similarly, in \cite{farahani2017} chance constraints are used to encode linear system dynamics with Gaussian distributed additive uncertainty and a restricted subset of STL specifications.
An approach based on sampled realizations of the uncertainty is proposed in \cite{farahani2015} for STL specifications, reducing the problem to a robust mixed-integer program. However no probabilistic guarantees are given.
Other works investigating data-driven approaches, have proposed methods for inference of specifications from samples \cite{kong2014}, instead of policy synthesis.

We propose a novel data-driven approach for synthesizing robust policies satisfying LTL specifications in the presence of uncertainties that enter nonlinearly into the known system dynamics and the specification. These policies are synthesized directly from samples of the uncertainty and no a-priori knowledge of the distribution or support of these uncertainties is required.
Our contributions are as follows:
\begin{enumerate*}[label=\roman*)]
	\item We present a data-driven method for robust planning under uncertainty.
	\item We synthesize \emph{disturbance feedback policies} that are functions of past observed uncertainties. The use of feedback mitigates the effect of the uncertainties by incorporating knowledge about past realizations in real-time. Compared to static plans, feedback policies ensure feasibility for a larger class of robust control problems \cite{goulart2006} and can improve control performance.
	This is in contrast to most other methods that only find open-loop policies based on knowledge of the uncertainty distribution or its support. The presented approach is therefore substantially more general than previous work.
	\item We provide a-priori probabilistic guarantees for the satisfaction of the specification, utilizing tools from scenario optimization \cite{esfahani2015}. The degree of guaranteed robustness grows with the number of samples used in the policy synthesis.
	\item The desired policies can be computed \emph{offline} as solutions of mixed-integer programs. These problems can be solved using off-the-shelf solvers such as CPLEX~\cite{cplex}.
	The policies are then implemented \emph{online} adapting to observed uncertainties in \emph{real-time}.
	We illustrate the performance of the method on two numerical case studies.
\end{enumerate*}

\paragraph{\textbf{Notation}} For a matrix $A \in \reals{n\times m}$, $[A]_{i,j}$ is the sub-matrix of block coordinates $i,j$.
Further, given $x \in \reals{n}$, $y \in \reals{m}$, we use $(x,y) = [\begin{matrix}x^\transp & y^\transp\end{matrix}]^\transp \in \reals{n+m}$ interchangeably. The $i$-th element of $x$ is $[x]_i = x_i$.
Given a random variable $w$, we say that samples $w^{(k)}$ of $w$ are i.i.d. if they are independent and identically distributed.
By $\uniform(a,b)$ we denote the continuous uniform distribution supported on the interval $[a,b]$, with $a,b\in\reals{}$ and $a \leq b$.
By $ \esssup_{x \in \set{X}} f(x)$ we denote the essential supremum of $f : \reals{n} \rightarrow \reals{}$ on $\set{X} \subseteq \reals{n}$.
We let $\vert \set{J} \vert$ denote the cardinality of a finite set $\set{J}$.
Given $x\in \reals{}$, $\ln(x)$ is the natural logarithm of $x$. Euler's number is denoted by $e$.

\section{Problem formulation} \label{sec:problem}
We consider uncertain discrete-time systems of the form:
\begin{equation} \label{eq:system_dynamics}
	x_{k+1} = A(w_k)x_k + B(w_k)u_k + c(w_k)\,,
\end{equation}
where $x_k \in \reals{n_x}$ is the system state and $w_k \in \reals{n_w}$ is the uncertain state of the disturbance at time $k$. The control input $u_k \in \reals{n_u}$ is applied between time $k$ and $k+1$. $A$, $B$ and $c$ are possibly nonlinear functions of appropriate dimensions. Note that \eqref{eq:system_dynamics} is \emph{affine parameter varying} and generalizes the class of systems considered in \cite{frick2017b}.
Furthermore, this work can be extend to include MLD systems \cite{bemporad1999}.

\subsection{Temporal logic specifications in uncertain environments}
A formula in LTL is a combination of \emph{atomic propositions} $p$ taken from a finite set $\apSet = \{p_1,\dots, p_m\}$, propositional logic operators $\neg (not)$, $\land (and)$, $\lor (or)$, and temporal operators $\lnext\:(next)$, $\until (until)$, $\release (release)$. We consider LTL formulae in positive normal form \cite{baier2008}, defined via the grammar:
\begin{equation*}
	p \mid \neg p \mid \phi \land \psi \mid \phi \lor \psi \mid \lnext \phi \mid \phi \until \psi \mid \phi \release \psi\,,
\end{equation*}
where $\phi$, $\psi$ are LTL formulae, and atomic propositions take values in $\{\true,\false\}$. Note that every LTL formula can be rewritten in positive normal form \cite{baier2008}. We consider bounded LTL formulae without loops \cite[Definition~2.1]{biere2006} that are affected by uncertainty. More specifically, we consider a set of atomic propositions $\apSet$ such that each atomic proposition $p_i\in \apSet$ is associated with a set defined over the state-disturbance space:
\begin{equation} \label{eq:polyhedral_set}
	\set{P}_i := \big\{ (x,w) \in \reals{n_x + n_w} \sep{\big} P_i(w) x \leq \rho_i(w) \big\}\,.
\end{equation}
The functions $P_i : \set{S} \rightarrow \reals{r_i \times n_x}$ and $\rho_i : \set{S} \rightarrow \reals{r_i}$ are defined over a common compact domain $\set{S} \subseteq \reals{n_w}$, and are nonlinear and finite-valued. The set $\set{S}$ can be inferred from samples, as noted in \refsec{section:scenario_approach}. We say that $(x,w)$ satisfies $p_i$, i.e., $(x,w) \models p_i$, if and only if $(x,w) \in \set{P}_i$. For fixed $w$, $\set{P}_i$ imposes polyhedral constraints on $x$ with $r_i$ inequalities.
The LTL semantics are then defined over the augmented state-disturbance state $(x,w)$, as in \cite[Equation~(2)]{frick2017b}.
As outlined in \refexa{exa:prop}, \eqref{eq:polyhedral_set} can be used to encode obstacles with uncertain position and orientation.
\begin{example} \label{exa:prop}
	Given state $x := (x_1,x_2) \in \reals{2}$ and uncertain obstacle position and orientation $w := (w_1,w_2,w_3) \in \reals{2}\times [0,2\pi)$. We consider a rectangular obstacle centered at $(w_1,w_2)$ and rotated by $w_3$. The following nonlinear inequalities describe the atomic proposition $p_{\rm obs}$ which is $\true$ if the state is inside the obstacle:
	\begin{equation*}
		\begin{bsmallmatrix} I \\ -I \end{bsmallmatrix}\mathcal{R}(w_3) \begin{bsmallmatrix}x_1\\x_2\end{bsmallmatrix} \leq \begin{bsmallmatrix} I \\ -I \end{bsmallmatrix}\mathcal{R}(w_3) \begin{bsmallmatrix}w_1\\w_2\end{bsmallmatrix} + \tfrac{1}{2}\begin{bsmallmatrix}b\\-b\end{bsmallmatrix}\,,
	\end{equation*}
	with side lengths of the rectangle $b := (b_1,b_2) \in \reals{2}$, identity matrix $I \in \reals{2\times 2}$ and rotation matrix $\mathcal{R}(w_3) := \big[\begin{smallmatrix} \cos(w_3) & -\sin(w_3) \\ \sin(w_3) & \cos(w_3) \end{smallmatrix}\big]$.
\end{example}

\subsection{Robust policy synthesis problem}
For system \eqref{eq:system_dynamics} and given a fixed \emph{planning horizon} $N$, we denote the state trajectory of length $N$ starting from $x_0$ as $\mathbf{x} :=  (x_0, x_1 , \ldots , x_N )$. The trajectory $\mathbf{x}$ is uniquely defined by its \emph{initial state} $x_0$, the input sequence $\mathbf{u} := (u_0, \ldots , u_{N-1})$ and a realization of the disturbance sequence $\mathbf{w} := (w_0, \ldots , w_N)$, where $\mathbf{w}$ is a random variable defined over the probability space $(\setbf{W}, \set{F}, \mathbb{P})$.
In this work, we consider the case where no direct knowledge of the distribution of $\mathbf{w}$ is available. Instead, we assume that a sufficient number of sampled realizations can be obtained, e.g. from historical data or from probabilistic models \cite{jordan1998}.
We furthermore do not make any assumptions about $\setbf{W}$ or more generally the distribution $\mathbb{P}$ of $\mathbf{w}$. For instance, $\setbf{W}$ may be non-convex and the disturbances may be correlated over time. 

In practical applications, past disturbances can be measured and then acted upon. Therefore, we consider \emph{disturbance feedback policies} of the form $\mathbf{u}(\mathbf{w}):= ( u_0(w_0), \ldots , u_{N-1}(w_0, \dots, w_{N-1}) )$. This allows us to synthesize \emph{causal} control policies that can react to disturbances in real-time based on observed uncertainty realizations. We indicate all such policies by $\mathbf{u}(\cdot) : \reals{Nn_w} \rightarrow \reals{Nn_u}$.

Given an LTL specification $\varphi$, our goal is to find a causal disturbance feedback policy $\mathbf{u}(\cdot)$ such that the resulting trajectory $\mathbf{x}$ satisfies the specification $\varphi$ robustly, i.e.,
\begin{equation*}
	(\mathbf{x} , \mathbf{w}) \models \varphi \quad \forall \mathbf{w} \in \setbf{W}\,,
\end{equation*}
where $(\mathbf{x} , \mathbf{w}) \models \varphi$ denotes the satisfaction of the formula $\varphi$.
Moreover, we want to minimize an objective function $l(\mathbf{u}(\cdot))$ with additional state and input constraints, yielding the problem: 
\begin{subequations} \label{eq:original_prob}
\begin{align}
	\min_{\mathbf{u}(\cdot) \text{ causal}} \: & l(\mathbf{u}(\cdot)) \\[-0.5em]
	\suchthat & \begin{gathered}\mathbf{x},\mathbf{u}(\mathbf{w}),\mathbf{w} \text{ satisfy system dynamics \eqref{eq:system_dynamics}} \\[-0.5em] \text{starting from initial state } x_0\,,\end{gathered} \\
	& (\mathbf{x}, \mathbf{w}) \models \varphi \hspace{1em} \hspace{1em} \forall \mathbf{w} \in \setbf{W}\,, \label{constraint_1} \\
	&(\mathbf{x},\mathbf{u}(\mathbf{w})) \in \setbf{X} \times  \setbf{U} \hspace{1em} \forall \mathbf{w} \in \setbf{W}\,, \label{eq:state_input_constr}
\end{align}
\end{subequations}
where $\setbf{X} := \set{X} \times \dots \times \set{X} \subseteq \reals{(N+1)n_x}$ and $\setbf{U} := \set{U} \times \dots \times \set{U} \subseteq \reals{Nn_u} $ are compact polyhedra.
The objective function $l(\mathbf{u}(\cdot))$ can e.g. encode a worst-case cost $l(\mathbf{u}(\cdot)) = \max_{\mathbf{w} \in \setbf{W}} \tilde{l}(\mathbf{u}(\mathbf{w}),\mathbf{x}, \mathbf{w})$ or an expected cost $l(\mathbf{u}(\cdot)) = \mathbb{E}_{\mathbf{w} \in \setbf{W}} [ \tilde{l}(\mathbf{u}(\mathbf{w}), \mathbf{x},\mathbf{w})]$.

Unfortunately, \refprob{eq:original_prob} cannot be solved since the support set $\setbf{W}$ is not known and only samples of $\mathbf{w}$ are available. This means that any guarantee is necessarily probabilistic. 
Moreover, \refprob{eq:original_prob} is a robust optimization problem over the infinite dimensional space of functions $\mathbf{u}(\cdot)$ and the specification constraint \eqref{constraint_1} is non-convex and not in a standard form recognized by off-the-shelf optimization solvers. 
We will deal with these challenges in the course of the next two sections.

\section{Reformulation as finite dimensional robust program} \label{sec:reformulation}
We will first show how the robust specification constraint~\eqref{constraint_1} of \refprob{eq:original_prob} can be brought into a more standard form, as a set of robust mixed-integer nonlinear constraints.
Then we address the infinite dimensionality of \refprob{eq:original_prob} by introducing a parameterization of $\mathbf{u}(\cdot)$. This allows us to optimize over a finite number of real-valued parameters, rather than over functions. 

We express the system dynamics \eqref{eq:system_dynamics} in terms of the initial state $x_0$, the uncertainty $\mathbf{w}$ and the policy $\mathbf{u}(\cdot)$:
\begin{equation} \label{eq:state_evolution}
	\mathbf{x} = \mathbf{A}(\mathbf{w}) x_0 + \mathbf{B}(\mathbf{w}) \mathbf{u}(\mathbf{w}) + \mathbf{c}(\mathbf{w})\,,
\end{equation}
where $\mathbf{A}(\mathbf{w}) \in \reals{(N+1)n_x \times n_x}$ is a block matrix with blocks $[\mathbf{A}(\mathbf{w})]_{i,1} := \prod_{j=0}^{i-2} A(w_{i-2-j}) \in \reals{n_x \times n_x}$ for $i=1,\ldots, N+1$, 
$\mathbf{B}(\mathbf{w})\in \reals{(N+1)n_x \times Nn_u}$ is a block matrix with blocks $[\mathbf{B}(\mathbf{w})]_{i,j} \in\reals{n_x \times n_u}$ such that for $i=1, \ldots,N+1$ and $j=1, \ldots,N$
\begin{equation*}
	[\mathbf{B}(\mathbf{w})]_{i,j} := \begin{cases} \mathbf{0} \in\reals{n_x \times n_u}&  \text{if }\: i\leq j\,, \\ \big(\prod_{k=2}^{i-j} A(w_{i-k})\big)B(w_{j-1}) & \text{otherwise}\,, \end{cases}
\end{equation*}
and a vector $\mathbf{c}(\mathbf{w}) \in \reals{(N+1)n_x}$ such that\\$ [\mathbf{c}(\mathbf{w})]_i = \sum_{k=0}^{i-2} \big(\prod_{j=0}^{i-k-3} A(w_{i-2-j})\big) c(w_k) \in \reals{n_x}$.

\subsection{Mixed-integer encoding of specification constraints} \label{sec:mipspec}
In \refprop{prop_equivalence}, we show that the specification constraint~\eqref{constraint_1} can be written as a set of mixed-integer nonlinear constraints. The proof employs the Big-M reformulation \cite{bemporad1999}, which relies on $M_i^{j+}$ and $M_i^{j-}$ being computable for $i=1,\ldots,|\apSet|$ and $j=1,\ldots,r_i$:
\begin{subequations} \label{eq:bigm}
\begin{align}
	\infty > M_i^{j+} &\geq \hspace*{-1.5em}\sup_{(x,w) \in \set{X} \times \set{S}} \hspace*{-1.3em} [P_i(w)]_j x - [\rho_i(w)]_j\,,\\
	-\infty < M_i^{j-} &\leq \hspace*{-1.5em}\inf_{(x,w) \in \set{X} \times \set{S}} \hspace*{-1.3em} [P_i(w)]_j x - [\rho_i(w)]_j\,.
\end{align}
\end{subequations}
\begin{proposition}\label{prop_equivalence}
	For given $\mathbf{x}$ and realization $\mathbf{w}$ the specification constraint $(\mathbf{x}, \mathbf{w}) \models \varphi$ can be equivalently represented as 
	\begin{equation} \label{eq:NLMI_constraints}
		\exists \delta \in \Delta : F^x(\mathbf{w}) \mathbf{x} + F^\delta(\mathbf{w}) \delta + f(\mathbf{w}) \leq 0\,,
	\end{equation}
	for appropriate functions $F^x$, $F^\delta$ and $f$, by introducing the auxiliary variables $\delta \in \Delta := \reals{n_c} \times \binaries{n_b}$.
\end{proposition}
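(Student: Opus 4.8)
The plan is to construct the encoding recursively over the structure of the formula $\varphi$, following the standard Big-M / mixed-integer approach to temporal logic (as in \cite{wolff2014,frick2017b,bemporad1999}). The key idea is that for every subformula $\psi$ of $\varphi$ and every time step $t \in \{0, \dots, N\}$ at which $\psi$ may be evaluated, we introduce a binary variable $z_\psi^t \in \{0,1\}$ that is constrained to equal $1$ if and only if the suffix of the augmented trajectory $(\mathbf{x}, \mathbf{w})$ starting at time $t$ satisfies $\psi$. Collecting all these $z_\psi^t$ into the binary part of $\delta$, the continuous part of $\delta$ being empty at this stage (we set $n_c = 0$ here; the continuous auxiliary variables are used elsewhere), the constraint $(\mathbf{x},\mathbf{w}) \models \varphi$ becomes the single linear constraint $z_\varphi^0 = 1$ together with the recursive constraints that link the $z$'s. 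All of these will be affine in $\mathbf{x}$, $\delta$, and in coefficients that depend (nonlinearly) on $\mathbf{w}$, which is exactly the form claimed in \eqref{eq:NLMI_constraints}.

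Next I would give the encoding clause by clause. (i) For an atomic proposition $p_i$ evaluated at time $t$, satisfaction $(x_t, w_t) \models p_i$ means $P_i(w_t) x_t \le \rho_i(w_t)$, i.e. all $r_i$ rows hold; using the bounds $M_i^{j+}, M_i^{j-}$ from \eqref{eq:bigm} one writes the standard Big-M equivalence relating $z_{p_i}^t$ to the sign of each $[P_i(w_t)]_j x_t - [\rho_i(w_t)]_j$, introducing one auxiliary binary per row if a conjunction-over-rows indicator is needed; negation $\neg p_i$ is handled identically with reversed inequalities (note the formula is in positive normal form, so negation only appears in front of atomic propositions). (ii) For $\psi = \phi_1 \land \phi_2$ at time $t$, impose $z_\psi^t \le z_{\phi_1}^t$, $z_\psi^t \le z_{\phi_2}^t$, $z_\psi^t \ge z_{\phi_1}^t + z_{\phi_2}^t - 1$; for $\psi = \phi_1 \lor \phi_2$, the dual inequalities. (iii) For $\psi = \lnext \phi$ at time $t < N$, set $z_\psi^t = z_\phi^{t+1}$ (and, since the formula is bounded and loop-free, handle the horizon boundary by the convention used for bounded LTL). (iv) For the until $\psi = \phi_1 \until \phi_2$ and release $\psi = \phi_1 \release \phi_2$, use their standard bounded-horizon fixpoint unrollings, $\phi_1 \until \phi_2 \equiv \phi_2 \lor (\phi_1 \land \lnext(\phi_1 \until \phi_2))$ and dually for release, which over the finite horizon expand into finitely many $\land$, $\lor$, $\lnext$ clauses already covered by (ii)–(iii). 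Finally, assembling every introduced inequality, stacking the $\mathbf{x}$-coefficients into $F^x(\mathbf{w})$, the $\delta$-coefficients into $F^\delta(\mathbf{w})$, and the constant terms (including those from $\rho_i$, the Big-M bounds, and the $\pm 1$ offsets) into $f(\mathbf{w})$, yields \eqref{eq:NLMI_constraints}, with $\Delta = \reals{n_c}\times\binaries{n_b}$ for suitable $n_c, n_b$.

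The correctness argument is an induction on formula structure: assuming each $z_{\phi_i}^{t'}$ correctly encodes satisfaction of the corresponding subformula at the corresponding time, one checks that the inequalities for $\psi$ force $z_\psi^t$ to the correct truth value — the forward direction (a satisfying $(\mathbf{x},\mathbf{w})$ admits a feasible $\delta$) and the backward direction (feasibility of the inequalities implies $(\mathbf{x},\mathbf{w})\models\varphi$) are both immediate from the Big-M and logical-gate equivalences. The main obstacle, and the part deserving the most care, is the bounded-LTL treatment of $\until$ and $\release$ at the horizon boundary: one must show that the finite unrolling faithfully captures the semantics of \cite[Equation~(2)]{frick2017b} for loop-free bounded formulae \cite[Definition~2.1]{biere2006}, so that no satisfying or violating suffix is lost by truncating at $N$. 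Everything else is a routine, if tedious, bookkeeping of how the nonlinear-in-$\mathbf{w}$ coefficients $P_i(w_t)$, $\rho_i(w_t)$ propagate into $F^x, F^\delta, f$.
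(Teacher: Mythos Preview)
Your proposal is correct and follows essentially the same route as the paper: Big-M encoding of the (nonlinear-in-$w$) atomic propositions, linear gate constraints for the Boolean connectives, and the observation that bounded loop-free LTL reduces to a finite Boolean combination of timed atoms. The one noteworthy discrepancy is your remark that $n_c=0$ and that ``the continuous auxiliary variables are used elsewhere'': in the paper's proof the continuous part of $\delta$ arises precisely here, because the logical-gate indicators (e.g.\ $\delta_\varphi\in[0,1]$ with $\delta_1,\delta_2\le\delta_\varphi\le\delta_1+\delta_2$ for $\varphi=p_1\lor p_2$) are declared as continuous variables that are forced to $\{0,1\}$ by the constraints, rather than as explicit binaries. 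Your all-binary encoding is equally valid (and arguably cleaner to verify), but it does not explain why the statement allows $n_c>0$; the paper's variant does, and as a side benefit keeps the number of declared binaries to those strictly needed for the atomic rows.
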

\begin{proof}
The proof follows analogously to \cite{wolff2014}. However, in~\cite{wolff2014} the encoding of the atomic propositions is linear, whereas in this work it is a nonlinear function of the uncertainty $w$.
For each atomic proposition $p_i \in \apSet$, we introduce auxiliary variables $\delta_i \in \binaries{r_i}$ enforcing that $[\delta_i]_j = 1$ if and only if $[P_i(w)]_j x \leq [\rho_i(w)]_j$. Using the Big-M formulation \cite{bemporad1999}, this is encoded by
\begin{align*}
	& [P_i(w)]_j x \leq [\rho_i(w)]_j + M_i^{j+} (1- [\delta_i]_j)\,, \\ 
	& [P_i(w)]_j x > [\rho_i(w)]_j + M_i^{j-} [\delta_i]_j\,,
\end{align*}
for $j=1,\ldots,r_i$, with $M_i^{j+}$ and $ M_i^{j-}$ as in \eqref{eq:bigm}.
As shown in \cite{wolff2014}, the satisfaction of $p_i$ can be encoded via \emph{linear} constraints on $\delta_i$.
Moreover, any finite Boolean combination of atomic propositions can be encoded with \emph{linear} constraints involving the binaries $\delta_i$ and additionally introduced continuous auxiliary variables, e.g., for a formula $\varphi = p_1 \lor p_2$ we introduce an additional continuous auxiliary variable $\delta_\varphi \in [0,1]$ such that $\delta_1,\delta_2 \leq \delta_\varphi$ and $\delta_\varphi \leq \delta_1 + \delta_2$. Note that $\delta_\varphi$ naturally takes only values in $\binaries{}$, due to $\delta_1,\delta_2 \in \binaries{}$.
Any bounded-time LTL formula can be written as such a finite Boolean combination of atomic propositions \cite{wolff2014}. Therefore, $F^x$, $F^\delta$ and $f$ can be constructed from all the aforementioned constraints and the dimensions of $\Delta$ corresponds to the total number of introduced auxiliary continuous and binary variables.
\end{proof}

\begin{subequations} \label{eq:condensedconstraint}
We use the state equation \eqref{eq:state_evolution} to express \eqref{eq:NLMI_constraints} as a function of only $x_0$, $\mathbf{u}(\cdot)$, $\delta$ and $\mathbf{w}$, obtaining the following constraint
\begin{equation} 
	\exists \delta \in \Delta : F^x(\mathbf{w})\big(\mathbf{A}(\mathbf{w}) x_0 +  \mathbf{B}(\mathbf{w})\mathbf{u}(\mathbf{w})\big) +F^\delta(\mathbf{w}) \delta + g(\mathbf{w}) \leq 0 \,,
\end{equation}
where $g(\mathbf{w}) := F^x(\mathbf{w})\mathbf{c}(\mathbf{w}) + f(\mathbf{w})$. The polyhedral state-input constraints \eqref{eq:state_input_constr} can be expressed similarly as
\begin{equation}
	S^x(\mathbf{w})\big(\mathbf{A}(\mathbf{w}) x_0 +  \mathbf{B}(\mathbf{w})\mathbf{u}(\mathbf{w})\big) +  S^u\mathbf{u}(\mathbf{w}) + s(\mathbf{w})\leq 0\,,
\end{equation}
for appropriate functions $S^x$, $s$ and matrix $S^u$.
\end{subequations}

\subsection{Parametrized feedback policies}
Problem \eqref{eq:original_prob} is an optimization problem over the infinite space of functions $\mathbf{u}(\cdot)$.
To tackle the infinite dimensionality, the problem can also be understood in the context of multi-stage robust optimization \cite{bental2009}, where $\mathbf{u}(\mathbf{w})$ depends on the realization of $\mathbf{w}$. In this context, policies $\mathbf{u}(\cdot)$ are typically parametrized, see \cite{bertsimas2016}. Hence, we consider feedback policies of the form
\begin{equation} \label{eq:parametrized policies}
	\mathbf{u}(\mathbf{w}):= \mathbf{H} \slimcdot \kappa(\mathbf{w})\,,
\end{equation} 
where $\mathbf{H}\in \reals{Nn_u \times n_\kappa}$ is a matrix of parameters and the function $\kappa: \reals{(N+1)n_w} \rightarrow \reals{n_\kappa}$ is fixed a-priori. We assume that appropriate restrictions on $\mathbf{H}$ and $\kappa(\cdot)$ ensure the causality of $\mathbf{u}(\cdot)$ and, for $\mathbf{H}$, these restrictions are captured by the polyhedral set $\set{H} \subseteq \reals{Nn_u \times n_\kappa} $. We let $d \leq Nn_u n_\kappa$ be the number of free parameters of $\mathbf{H} \in \set{H}$. Note that $\set{H}$ can also include constraints that enforce a desired sparsity structure of the policy, e.g. fixing some entries to zero. 
Policies parametrized as \eqref{eq:parametrized policies} also include \emph{piecewise-affine policies}, a large and commonly used class of policies, as shown in \refexa{example:piecewise_affine}.
\begin{example}[Piecewise-affine policies]\label{example:piecewise_affine}
	A causal piecewise-affine policy with $P \in \naturals{}$ pieces can be written in form \eqref{eq:parametrized policies} as
	 \begin{equation*}
	 	\mathbf{H} \slimcdot \kappa(\mathbf{w}) = \begin{bmatrix} \mathbf{H}_1 & \cdots & \mathbf{H}_P \end{bmatrix} \slimcdot \begin{bmatrix} \kappa_1(\mathbf{w})^\transp & \dots & \kappa_P(\mathbf{w})^\transp \end{bmatrix}^\transp\,,
	 \end{equation*}
	 with $\mathbf{H}_i \in \reals{Nn_u \times (N+1)n_w+1}$ block lower triangular and
	 \begin{equation*}
	 	\kappa_i(\mathbf{w}) := \begin{cases}\begin{bsmallmatrix}1 \\ \mathbf{w}\end{bsmallmatrix} & \text{if } \mathbf{w} \in \setbf{K}_i\,, \\
	 	0 & \text{otherwise}\,, \end{cases}
	 \end{equation*}
	 for $i=1,\ldots,P$, where $\{\setbf{K}_i\}_{i=1}^P$ is a partition of $\reals{(N+1)n_w}$.
\end{example}

Restricting \refprob{eq:original_prob} to parametrized policies~\eqref{eq:parametrized policies} yields a finite dimensional \emph{inner approximation}, the robust program $\RP$.
\begin{equation*}
	\RP \left\{ \begin{aligned}
		\min_{\mathbf{H}\in \set{H}} \: & {J(\mathbf{H})}  \nonumber \\
		\suchthat &  \exists \delta \in \Delta : \gamma_\varphi(\mathbf{H}, \delta ,\mathbf{w}) \leq 0 \quad \forall \mathbf{w} \in \setbf{W} \,, \\
		& \gamma_s(\mathbf{H},\mathbf{w}) \leq 0 \quad \forall \mathbf{w} \in \setbf{W}\,,
	\end{aligned} \right.
\end{equation*}
where using \eqref{eq:condensedconstraint} and \eqref{eq:parametrized policies} we have defined
\begin{align*}
	\gamma_\varphi(\mathbf{H}, \delta ,\mathbf{w}) &:= F^x \big(\mathbf{A}(\mathbf{w}) x_0 + \mathbf{B}(\mathbf{w})\mathbf{H}\slimcdot \kappa(\mathbf{w})\big) + F^\delta(\mathbf{w}) \delta + g(\mathbf{w})\,, \\
	\gamma_s(\mathbf{H} ,\mathbf{w}) &:= S^x\big(\mathbf{A}(\mathbf{w}) x_0 + \mathbf{B}(\mathbf{w})\mathbf{H}\slimcdot \kappa(\mathbf{w})\big) + S^u \mathbf{H}\slimcdot \kappa(\mathbf{w}) + s(\mathbf{w})\,.
\end{align*}
We express the specification constraint of $\RP$ more compactly as
\begin{equation} \label{eq:specconst}
	\eta_\varphi(\mathbf{H},\mathbf{w}) := \min_{\delta \in \Delta} \max_{j} \: [\gamma_\varphi(\mathbf{H}, \delta ,\mathbf{w})]_j \,.
\end{equation}

Standard techniques for solving robust programs replace the robust constraints with their dualizations \cite{goulart2006}. The resulting problems can then be solved using off-the-shelf solvers. However, these reformulations rely on strong duality which in general does not hold  since the specification constraint function $\eta_\varphi(\mathbf{H},\mathbf{w})$ is piecewise-affine non-convex in $\mathbf{H}$ due to the non-convexity of $\Delta$, and non-concave in $\mathbf{w}$ \cite{frick2017b}. 
Finally, we only have access to samples $\mathbf{w}^{(k)}$ of $\mathbf{w}$ and in particular $\setbf{W}$ is not known directly. As a result $\RP$ is still very challenging to solve.

In the next section we show how a set of such samples, or scenarios, can be used to obtain an approximation of $\RP$ via the \emph{scenario approach} \cite{campi2008}. The resulting problem can then be solved using off-the-shelf solvers. Naturally the degree of approximation and the obtainable guarantees will be probabilistic. 

\section{Generalization guarantees from samples via the scenario approach} \label{section:scenario_approach}
We assume that a collection $\setbf{W}^{K} := \{\mathbf{w}^{(1)}, \ldots, \mathbf{w}^{(K)} \} \subseteq \setbf{W}$ of $K$ i.i.d. samples of $\mathbf{w}$ is given and that we have no direct knowledge about the distribution $\mathbb{P}$ of $\mathbf{w}$ or its support $\setbf{W}$. Therefore, $\RP$ cannot be solved directly. Instead, we construct a \emph{sampled} optimization problem based on $\RP$ that utilizes the \emph{multisample} $\setbf{W}^{K}$. For solutions of this sampled problem, we derive a generalization guarantee on the constraint satisfaction of $\RP$, which will allow us to relate this solution to the robust program $\RP$, in a probabilistic sense.

Given a multisample $\setbf{W}^{K}$ that follows the distribution $\mathbb{P}^K$, we consider the \emph{scenario} version  \cite{campi2008,calafiore2010,schildbach2013} of $\RP$. 
We split the multisample $\setbf{W}^{K}$ into two groups, the first $K_\varphi \in \naturals{}$ samples for the specification constraint $\eta_\varphi$ and the remaining $K_s := K-K_\varphi$ samples for the state-input constraint $\gamma_s$. This allows weighing the relative importance of the two constraints and obtaining individual probabilistic guarantees that emphasize one constraint over the other. Note that the two constraints could be split further, giving more fine-grained control over the probabilistic guarantees.
For a given $\mathbb{K} := (K_\varphi, K_s)$ the scenario version of $\RP$ is
\begin{subequations}
\begin{equation*} 
	\SP{\mathbb{K}}\left\{ \begin{aligned}
		\min_{\mathbf{H}\in \set{H}} \: & {J(\mathbf{H})}\\
		\suchthat & \eta_\varphi(\mathbf{H}, \mathbf{w}) \leq 0 \quad \forall \mathbf{w} \in \{\mathbf{w}^{(1)},\ldots,\mathbf{w}^{(K_\varphi)}\}\,, \\
		& \gamma_s(\mathbf{H}, \mathbf{w}) \leq 0 \quad \forall \mathbf{w} \in \{\mathbf{w}^{(K_\varphi+1)},\ldots,\mathbf{w}^{(K)}\}\,.
	\end{aligned} \right.
\end{equation*}
\end{subequations}
For the remainder of this paper, we assume $J(\mathbf{H})$ to be linear or convex quadratic in $\mathbf{H}$. This can include expected value objectives approximated using sample average approximation \cite{shapiro2013}, or worst-case objectives if $J$ is linear in $\mathbf{H}$ for fixed $\mathbf{w}$. Furthermore, we make the standard assumption that $\SP{\mathbb{K}}$ is feasible for almost all realizations $\setbf{W}^K$ and that it admits a unique optimal solution \cite{campi2008}.
Note that existence and uniqueness can be relaxed via refinement techniques and using a suitable tie-breaking rule, respectively, see \cite{calafiore2010}.
We let $\SPopt{\mathbb{K}}$ be the optimizer of $\SP{\mathbb{K}}$. Note that $\SPopt{\mathbb{K}}$ is itself a random variable, since it depends on the multisample $\setbf{W}^{K}$.

We use the \emph{scenario approach} to establish a probabilistic relationship between $\SP{\mathbb{K}}$ and $\RP$. A basic concept used in the scenario approach is the \emph{violation probability} $V_\varphi(\mathbf{H})$ of constraint $\eta_\varphi$ for a given $\mathbf{H}\in \set{H}$. It is the probability with which $\mathbf{H}$ violates the constraint function $\eta_\varphi(\cdot, \mathbf{w})$ for an arbitrary realization of $\mathbf{w}$. 
\begin{definition}[\cite{schildbach2013}, Violation probability]
	The violation probability of $\eta_\varphi$ for a given $\mathbf{H}\in \set{H}$ is defined as
	\begin{equation*}
		V_\varphi(\mathbf{H}):=\mathbb{P} \{\mathbf{w} \in \setbf{W} \sep{} \exists j : [\eta_\varphi(\mathbf{H},\mathbf{w})>0]_j\}\,.
	\end{equation*} 
\end{definition}
\noindent The violation probability of $\gamma_s$ is defined analogously.
We would like to guarantee that the violation probability of the solution $\SPopt{\mathbb{K}}$ is smaller than a given $\epsilon_\varphi \in (0,1)$. However, $V_\varphi(\SPopt{\mathbb{K}})$ is itself probabilistic, since $\SPopt{\mathbb{K}}$ is a random variable. Therefore, we can only guarantee $V_\varphi(\SPopt{\mathbb{K}}) \leq \epsilon_\varphi$ with a certain confidence.
The generalization results from the scenario approach literature provide a bound $(1-\beta_\varphi) \in (0,1)$ on the confidence, for a given $\mathbb{K}$ of $K$ and an allowable violation parameter $\epsilon_\varphi$. More formally they guarantee
\begin{equation} \label{eq:guarantee}
	\mathbb{P}^K[V_\varphi(\SPopt{\mathbb{K}}) > \epsilon_\varphi] \leq \beta_\varphi\,.
\end{equation}
Alternatively, the results can be used to lower bound the number of samples $K$ needed to ensure a desired constraint violation with a given confidence. We equivalently define $\epsilon_s,\beta_s \in (0,1)$ for $\gamma_s$.

In many practical applications it is difficult or expensive to obtain samples of $\mathbf{w}$. Furthermore, the number of constraints, and therefore the complexity of $\SP{\mathbb{K}}$ grows with the number of samples. We thus want to select the smallest number of samples $K$ such that the given violation parameters $\epsilon_\varphi$, $\epsilon_s$ and confidence parameters $\beta_\varphi$, $\beta_s$ are still ensured.
For the convex case, i.e., when the objective function $J$ and constraint functions $\eta_\varphi$, $\gamma_s$ are convex for fixed $\mathbf{w}$, bounds on $K$ can be obtained which depend on the \emph{support dimension} of $\eta_\varphi$ and $\gamma_s$ \cite{schildbach2013}. To define the support dimension, we first define a \emph{support constraint}:
\begin{definition}[\cite{campi2008}, Support constraint]
	A constraint of $\SP{\mathbb{K}}$ is a support constraint if its removal strictly improves the optimal cost of $\SP{\mathbb{K}}$. 
\end{definition}
\begin{definition}[\cite{schildbach2013}, Support dimension] \label{def:supportdim}
	The support dimension of $\eta_\varphi$ is the smallest $\zeta_\varphi \in \naturals{}$ s.t. $\esssup_{\setbf{W}^K} \{ \,| sc_\varphi(\SP{\mathbb{K}}) |\, \} \leq \zeta_\varphi$, where $sc_\varphi(\SP{\mathbb{K}})$ is the set of support constraints contributed by the sampled instances of $\eta_\varphi$.
\end{definition}
\noindent The support dimension $\zeta_s$ of $\gamma_s$ is defined analogously. 
For the convex case defined above, the confidence can be bounded as a function of $K_\varphi$, $\epsilon_\varphi$ and $\zeta_\varphi$ \cite{schildbach2013}, i.e.,
\begin{equation*}
	\mathbb{P}^K[V_\varphi(\SPopt{\mathbb{K}})> \epsilon_\varphi] \leq \sum_{i=0}^{\zeta_\varphi-1} \binom{K_\varphi}{i} \epsilon_\varphi^i (1-\epsilon_\varphi)^{K_\varphi-i}\,.
\end{equation*}
When the support dimension $\zeta_\varphi$ is known, this can be used to select the number of samples $K_\varphi$ such that \eqref{eq:guarantee} is ensured for a given $\epsilon_\varphi$ and $\beta_\varphi$. However, computing the support dimension can be challenging \cite{zhang2015}. In the convex case it can be upper bounded by the total number of optimization variables, using Helly's theorem \cite{campi2008,calafiore2010}. In \cite{schildbach2013} it was shown that $\zeta_\varphi$ is upper bounded by the so called \emph{support rank} of the constraint $\eta_\varphi$, which in some cases can be significantly smaller than the number of optimization variables.

Recall the robust program $\RP$ considered in this paper. 
The constraint $\gamma_s$ is convex in $\mathbf{H}$. Therefore, its support dimension $\zeta_s$ can be upper bounded by $d$, the number of free parameters of the policy.
As a consequence, $\epsilon_s, \beta_s$ and $\zeta_s$ can be used to determine $K_s$ such that \eqref{eq:guarantee} is satisfied for the state-input constraint $\gamma_s$.
However, the specification constraint $\eta_\varphi$ is non-convex and constraints of form \eqref{eq:specconst} have not been considered previously in the literature.
In fact, $\RP$ does not fall into any of the classes of robust non-convex programs considered in \cite{calafiore2012,grammatico2016}.
In \refprop{prop:infinite_helly_dimension} we show that the support dimension $\zeta_\varphi$ cannot be bounded a-priori, and therefore arguments as in \cite{schildbach2013,calafiore2012,grammatico2016} cannot be used to give bounds on the number of samples $K_\varphi$ that enforce guarantees on $V_\varphi(\SPopt{\mathbb{K}})$.
\begin{proposition} \label{prop:infinite_helly_dimension}
	There does not exist an upper bound for the support dimension $\zeta_\varphi$ of $\eta_\varphi$, independent from $K_\varphi$.
\end{proposition}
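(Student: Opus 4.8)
The plan is, for each $m\in\naturals{}$, to exhibit an instance of $\RP$ together with the choice $K_\varphi=m$ (and, say, $K_s=0$) for which \emph{every} one of the $m$ sampled specification constraints of $\SP{\mathbb{K}}$ is a support constraint on an event of positive $\mathbb{P}^K$-probability; this forces $\esssup_{\setbf{W}^K}\{|sc_\varphi(\SP{\mathbb{K}})|\}\geq m$, hence $\zeta_\varphi\geq m$, and since $m$ is arbitrary no bound independent of $K_\varphi$ can exist. Concretely I would use scalar, trivial dynamics: $N=1$, $n_x=n_u=n_w=1$, $A\equiv 0$, $B\equiv 1$, $c\equiv 0$, and the parametrization $\kappa\equiv 1$, so that the trajectory is determined by the single free parameter $\mathbf{H}\in\set{H}:=[0,2]$ via $x_1=u_0=\mathbf{H}$ (thus $d=1$, matching the convex Helly bound). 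Let the uncertainty be such that a scalar threshold index $j(\mathbf{w})$ is uniform on $\{1,\dots,m\}$, introduce two atomic propositions with $w$-dependent thresholds, $p:\,x_1\geq (j(\mathbf{w})-1)/m$ and $q:\,x_1\geq j(\mathbf{w})/m$, and take the specification $\varphi:=\lnext(\neg p\lor q)$. Then, within $\set{H}$, the set of $\mathbf{H}$ satisfying the sampled constraint $\eta_\varphi(\mathbf{H},\mathbf{w}^{(k)})\leq 0$ is exactly $[0,2]\setminus[(j_k-1)/m,\,j_k/m)$ where $j_k:=j(\mathbf{w}^{(k)})$; this is a union of at most two intervals -- a direct consequence of the disjunction $\neg p\lor q$ -- and intersecting $m$ such sets yields a disconnected union of polyhedra, which is precisely why the Helly-based bound on the support dimension fails to apply.

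The heart of the proof is then a short computation on $\SP{\mathbb{K}}$ with the linear objective $J(\mathbf{H}):=\mathbf{H}$. Consider the event $E$ that the $m$ i.i.d. samples realize all $m$ distinct values $1,\dots,m$; its probability is $m!/m^m>0$. On $E$ the union of the $m$ forbidden intervals is $\bigcup_{j=1}^m[(j-1)/m,\,j/m)=[0,1)$, so the feasible set of $\SP{\mathbb{K}}$ is $[1,2]$ and its unique optimizer is $\SPopt{\mathbb{K}}=1$ with optimal cost $1$. Removing the $k$-th specification constraint deletes precisely the piece $[(j_k-1)/m,\,j_k/m)$ from the forbidden region; the point $(j_k-1)/m$ then becomes feasible, since it belongs to none of the remaining pieces $[(j-1)/m,j/m)$ with $j\neq j_k$, and the optimal cost strictly decreases to $(j_k-1)/m<1$. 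Hence on $E$ all $m$ sampled specification constraints are support constraints, i.e. $|sc_\varphi(\SP{\mathbb{K}})|=m$ there. Since this occurs with positive probability, $\esssup_{\setbf{W}^K}\{|sc_\varphi(\SP{\mathbb{K}})|\}\geq m=K_\varphi$, so $\zeta_\varphi\geq K_\varphi$, which proves the claim. I would also note that this instance satisfies the blanket assumptions of Section~\ref{section:scenario_approach} on \emph{every} multisample -- the feasible set always contains $[1,2]$, so $\SP{\mathbb{K}}$ is always feasible, and the minimizer of $\mathbf{H}$ over it is always unique and attained -- and that, since the thresholds may be taken affine in $\mathbf{w}$, the obstruction stems from the disjunctive non-convexity of $\eta_\varphi$ rather than from any nonlinearity in $w$.

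The step I expect to require the most care is choosing the geometry so that \emph{all} $m$ sampled constraints bind simultaneously in the support sense. Naive attempts collapse: if the forbidden intervals overlap, only the two extremal ones matter; if the $J$-minimizer over $\set{H}$ remains feasible under all constraints, none of them matters; in either case $\zeta_\varphi$ stays bounded, exactly as for convex problems. The two essential ingredients are (i) placing $\set{H}$ so that its $J$-minimal point lies in the closure of the forbidden zone, which pins $\SPopt{\mathbb{K}}$ at the ``top'' of the configuration, and (ii) tiling $[0,1)$ by $m$ pairwise \emph{disjoint} half-open pieces, one per sampled constraint, so that each piece is forbidden by a single constraint and its removal uncovers a strictly cheaper feasible point. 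A minor additional point is to make the passage from a positive-probability event to the essential supremum precise, and to verify the feasibility and uniqueness requirements so that the counterexample is a bona fide instance of the framework in Definition~\ref{def:supportdim}.
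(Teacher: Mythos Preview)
Your argument is correct: on the positive-probability event $E$ where the sampled indices $j_1,\ldots,j_m$ form a permutation of $\{1,\ldots,m\}$, the feasible set is exactly $[1,2]$, the unique minimizer is $\SPopt{\mathbb{K}}=1$, and removing the $k$-th sampled constraint frees the point $(j_k-1)/m<1$, so every sampled specification constraint is a support constraint. Hence $\zeta_\varphi\geq K_\varphi$ and the proposition follows. Your verification that the infimum is always attained (so uniqueness holds on \emph{every} multisample, not just on $E$) is also sound.

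Your route differs from the paper's in two respects worth noting. First, the paper works with a \emph{single} fixed instance---a two-dimensional decision variable $\mathbf{H}=(h_1,h_2)$, continuous uniform uncertainty on a box in $\reals{2}$, and a piecewise-affine concave ``tent'' constraint $\eta_\varphi$---and then exhibits, for every $K_\varphi$, a positive-measure set of multisamples on which all $K_\varphi$ constraints are supporting. Your construction instead produces a \emph{family} of instances indexed by $m=K_\varphi$ (the distribution of $j(\mathbf{w})$ depends on $m$), so you prove the weaker statement ``for every $K_\varphi$ there is an instance with $\zeta_\varphi\geq K_\varphi$'' rather than ``there is an instance with $\zeta_\varphi\geq K_\varphi$ for every $K_\varphi$.'' Both readings establish that no a-priori bound on $\zeta_\varphi$ exists across the class $\RP$, which is what the surrounding discussion needs; but the paper's version is formally stronger. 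Second, your construction is more explicitly tied to the LTL machinery of \refsec{sec:mipspec}: you write down atomic propositions $p,q$ and a specification $\varphi=\lnext(\neg p\lor q)$, whereas the paper specifies $\eta_\varphi$ directly as a piecewise-affine function without exhibiting the underlying LTL formula. This makes your example arguably more transparent as an instance of the framework, at the cost of the dependence on $m$.
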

\vspace{-\baselineskip}
\begin{proof} 
	See \refapp{app:proofs}, page~\pageref{app:proofs}.
\end{proof}
This shows that the usual approaches for obtaining sampling bounds will not work for $\RP$.
In the following section, we propose appropriate inner approximations of $\RP$ that fall into the class of problems considered in \cite{frick2018}. We then show that we can still obtain guarantees on the violation probabilities $V_\varphi$ and $V_s$.

\subsection{Inner approximations with probabilistic guarantees}
Consider the robust program $\RP$. For a given $\mathbf{H} \in \set{H}$ and $\mathbf{w} \in \setbf{W}$, the existence of a $\delta \in \Delta$ such that $\gamma_\varphi(\mathbf{H}, \delta ,\mathbf{w}) \leq 0$ ensures that the specification $\varphi$ is satisfied. Hence, for a given $\mathbf{H}$, we can think of $\delta$ as a \emph{recourse decision} on $\mathbf{w}$, i.e., the decision $\delta$ can depend on $\mathbf{w}$. Such recourse variables are dealt with in the context of multistage adaptive optimization, by parameterizing the search space of $\delta$. This leads to an inner approximation of $\RP$. Mixed-integer recourse variables, as considered in this work, can be parametrized as \emph{piecewise-constant} over $\setbf{W}$ \cite{bertsimas2016,bertsimas2017}, i.e.,
\begin{equation}\label{eq:piecewise_delta}
	\delta(\mathbf{w}):= \delta_i \text{ if } \mathbf{w} \in \setbf{D}_i \quad \text{for } i=1,\ldots,P_\delta\,,
\end{equation}
where $\{\setbf{D}_i\}_{i=1}^{P_\delta}$ is a partition of $\reals{(N+1)n_w}$, which can e.g. be obtained following \cite{postek2016}. Note that $\delta$ is only used to encode the specification, hence its parametrization can be non-causal.
Moreover, the fact that the parametrization is piecewise-constant is not conservative in this case, since the continuous components of $\delta$ must take values in $\binaries{}$ by construction in \refsec{sec:mipspec}.
The parametrization \eqref{eq:piecewise_delta} of $\delta$ leads to the following inner approximation of $\RP$: 
\begin{equation*} 
	\RPapprox\left\{ \begin{aligned}
		\min_{\mathbf{H} \in \set{H},\, \greekbf{\delta}\in \greekbf{\Delta}} \: &  {J(\mathbf{H})} \\
		\suchthat & \gamma_\varphi(\mathbf{H},\delta_{\set{I}(\mathbf{w})},\mathbf{w}) \leq 0 \quad \forall \mathbf{w} \in \setbf{W}\,, \\
		& \gamma_s(\mathbf{H},\mathbf{w}) \leq 0 \quad \forall \mathbf{w} \in \setbf{W}\,,
	\end{aligned} \right. 
\end{equation*}
where $\greekbf{\delta}:= (\delta_1, \ldots , \delta_{P_\delta}) \in \greekbf{\Delta}:=\Delta \times \cdots \times\Delta$ is the stacked version of copies of $\delta$ for each element of the partition, and $\set{I}(\mathbf{w}) := \{ i \in \{1,\dots, P_\delta\} \sep{} \text{s.t. } \mathbf{w} \in \setbf{D}_i \}$.

Given a multisample $\setbf{W}^K$, the scenario version of $\RPapprox$ is
\begin{equation*} 
	\SPapprox{\mathbb{K}} \left\{ \begin{aligned}
		\min_{\mathbf{H}\in \set{H},\, \greekbf{\delta}\in \greekbf{\Delta}} &  {J(\mathbf{H})} \\
		\suchthat & \gamma_\varphi(\mathbf{H}, \delta_{\set{I}(\mathbf{w})}, \mathbf{w}) \leq 0 \quad \forall \mathbf{w} \in \{\mathbf{w}^{(1)},\ldots,\mathbf{w}^{(K_\varphi)}\}\,, \\
		& \gamma_s(\mathbf{H}, \mathbf{w}) \leq 0 \quad \forall \mathbf{w} \in \{\mathbf{w}^{(K_\varphi+1)},\ldots,\mathbf{w}^{(K)}\}\,,
	\end{aligned} \right.
\end{equation*}
where the optimizer $(\SPapproxoptH{\mathbb{K}}, \SPapproxoptd{\mathbb{K}})$ is assumed to exist and be unique.
Note that the domain $\set{S}$ in \eqref{eq:polyhedral_set} can be set to $\set{S} := \{\mathbf{w}^{(1)},\ldots,\mathbf{w}^{K_\varphi}\}$, without loss of generality.
Let $\widetilde{V}_\varphi, \widetilde{V}_s $ be the violation probabilities related to the two constraints in $\RPapprox$. That is, for given $\mathbf{H},\greekbf{\delta}$ we have
\begin{align*}
	\widetilde{V}_\varphi(\mathbf{H}, \greekbf{\delta}) &:= \mathbb{P}\{ \mathbf{w}\in \setbf{W} \sep{} \exists j :[\gamma_\varphi(\mathbf{H}, \delta_{\set{I}(\mathbf{w})}, \mathbf{w})]_j > 0\}\,, \\
  \widetilde{V}_s(\mathbf{H}) &:= \mathbb{P}\{ \mathbf{w}\in \setbf{W} \sep{} \exists j :[\gamma_s(\mathbf{H}, \mathbf{w})]_j > 0\}\,.
\end{align*}
The violation probabilities related to $\RP$ and $\RPapprox$ can be linked via the following lemma.
\begin{lemma} \label{lem:violation_probs}
	Consider, $\mathbb{K} = (K_\varphi,K_s)$ and $\epsilon_\varphi,\epsilon_s \in (0,1)$. Solutions $(\SPapproxoptH{\mathbb{K}},\SPapproxoptd{\mathbb{K}})$ of $\:\SPapprox{\mathbb{K}}$ satisfy
\begin{align*} 
	\mathbb{P}^K[V_\varphi(\SPapproxoptH{\mathbb{K}}) > \epsilon_\varphi] &\leq \mathbb{P}^K[\widetilde{V}_\varphi(\SPapproxoptH{\mathbb{K}}, \SPapproxoptd{\mathbb{K}}) > \epsilon_\varphi ]\,,\\
	\mathbb{P}^K[V_s(\SPapproxoptH{\mathbb{K}}) > \epsilon_s] &= \mathbb{P}^K[\widetilde{V}_s(\SPapproxoptH{\mathbb{K}}, \SPapproxoptd{\mathbb{K}}) > \epsilon_s ]\,.
\end{align*}
\end{lemma}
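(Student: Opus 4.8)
The plan is to establish both relations by a \emph{pointwise} comparison of the relevant violation events over fixed realizations of the multisample $\setbf{W}^K$, and then apply monotonicity of the outer measure $\mathbb{P}^K$. So I would fix an arbitrary realization of $\setbf{W}^K$ and let $(\SPapproxoptH{\mathbb{K}},\SPapproxoptd{\mathbb{K}})$, with $\SPapproxoptd{\mathbb{K}}=(\delta_1^\star,\ldots,\delta_{P_\delta}^\star)$, denote the corresponding optimizer of $\SPapprox{\mathbb{K}}$.

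For the specification constraint, the first step is to re-read the $\min$--$\max$ in \eqref{eq:specconst}: for fixed $\mathbf{H}$ and $\mathbf{w}$, $\eta_\varphi(\mathbf{H},\mathbf{w})>0$ holds if and only if there is \emph{no} $\delta\in\Delta$ with $\gamma_\varphi(\mathbf{H},\delta,\mathbf{w})\le0$. Hence the particular choice $\delta=\delta_{\set{I}(\mathbf{w})}^\star\in\Delta$ supplied by the piecewise-constant parametrization \eqref{eq:piecewise_delta} serves as a certificate: whenever $\gamma_\varphi(\SPapproxoptH{\mathbb{K}},\delta_{\set{I}(\mathbf{w})}^\star,\mathbf{w})\le0$ we also have $\eta_\varphi(\SPapproxoptH{\mathbb{K}},\mathbf{w})\le0$. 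Taking contrapositives gives the set inclusion $\{\mathbf{w}\in\setbf{W}\mid\eta_\varphi(\SPapproxoptH{\mathbb{K}},\mathbf{w})>0\}\subseteq\{\mathbf{w}\in\setbf{W}\mid\exists j:[\gamma_\varphi(\SPapproxoptH{\mathbb{K}},\delta_{\set{I}(\mathbf{w})}^\star,\mathbf{w})]_j>0\}$, i.e.\ $V_\varphi(\SPapproxoptH{\mathbb{K}})\le\widetilde{V}_\varphi(\SPapproxoptH{\mathbb{K}},\SPapproxoptd{\mathbb{K}})$, for this realization. Since the realization was arbitrary, this inequality holds surely, and therefore $\{V_\varphi(\SPapproxoptH{\mathbb{K}})>\epsilon_\varphi\}\subseteq\{\widetilde{V}_\varphi(\SPapproxoptH{\mathbb{K}},\SPapproxoptd{\mathbb{K}})>\epsilon_\varphi\}$ as events on the probability space of $\setbf{W}^K\sim\mathbb{P}^K$; monotonicity of probability then yields the claimed inequality.

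For the state-input constraint I would simply observe that $\gamma_s$ is the \emph{same} function in $\RP$ and $\RPapprox$ and does not involve the recourse variable $\greekbf{\delta}$ at all; consequently the events defining $V_s(\mathbf{H})$ and $\widetilde{V}_s(\mathbf{H})$ coincide for every $\mathbf{H}$, so $V_s(\SPapproxoptH{\mathbb{K}})=\widetilde{V}_s(\SPapproxoptH{\mathbb{K}})$ holds surely and the two probabilities are equal.

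I do not anticipate a genuine obstacle. The only point requiring care is the equivalence between $\eta_\varphi(\mathbf{H},\mathbf{w})>0$ and infeasibility of the inner existential constraint $\exists\delta\in\Delta:\gamma_\varphi(\mathbf{H},\delta,\mathbf{w})\le0$, which is exactly what makes any admissible $\delta_{\set{I}(\mathbf{w})}^\star$ a witness of $\eta_\varphi(\mathbf{H},\mathbf{w})\le0$ and thereby forces the inner-approximation violation set to contain the $\RP$ violation set; the remainder is set inclusion together with monotonicity of $\mathbb{P}^K$.
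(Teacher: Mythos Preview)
Your proposal is correct and follows essentially the same approach as the paper: the key observation is that $\eta_\varphi(\SPapproxoptH{\mathbb{K}},\mathbf{w})\le\gamma_\varphi(\SPapproxoptH{\mathbb{K}},\delta_{\set{I}(\mathbf{w})}^\star,\mathbf{w})$ for every $\mathbf{w}$, which gives $V_\varphi\le\widetilde{V}_\varphi$ pointwise, while $V_s$ and $\widetilde{V}_s$ coincide because $\gamma_s$ does not involve $\greekbf{\delta}$. Your write-up simply spells out the set-inclusion and monotonicity steps more explicitly than the paper's terse argument.
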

\begin{proof}
	By definition $\eta_\varphi(\SPapproxoptH{\mathbb{K}}, \mathbf{w}) \leq \gamma_\varphi(\SPapproxoptH{\mathbb{K}}, \delta_{\set{I}(\mathbf{w})}^\star[\mathbb{K}], \mathbf{w})$ for any $\mathbf{w} \in \setbf{W}$ and therefore $V_\varphi(\SPapproxoptH{\mathbb{K}}) \leq \widetilde{V}_\varphi(\SPapproxoptH{\mathbb{K}}, \SPapproxoptd{\mathbb{K}})$. Furthermore, $V_s$ and $\widetilde{V}_s$ refer to the exact same constraint $\gamma_s$, therefore $V_s(\SPapproxoptH{\mathbb{K}}) = \widetilde{V}_s(\SPapproxoptH{\mathbb{K}})$. This yields the result.
\end{proof}

\reflem{lem:violation_probs} allows us to obtain probabilistic guarantees for the solution $\SPapproxoptH{\mathbb{K}}$ of $\SPapprox{\mathbb{K}}$, relating it to the original robust program $\RP$.
The following main theorem gives bounds on the number of samples needed to obtain small constraint violation probabilities with a certain confidence.
\begin{theorem}\label{thm:1}
Let $d$ be the number of free parameters of the policy and $n_c$, $n_b$ the number of continuous and binary variables of $\delta$, respectively. Let $P_\delta$ be the number of elements of the partition for $\delta$. Let $\epsilon_\varphi, \epsilon_s \in (0,1)$ be the desired violation parameters and $\beta_\varphi, \beta_s \in (0,1)$ the desired confidence parameters. For $\mathbb{K} = (K_\varphi, K_s)$ such that
\begin{subequations} \label{eq:numsamp}
\begin{align}
	K_\varphi &\geq \frac{e}{e-1}\frac{1}{\epsilon_\varphi}\big(\ln{\left(\frac{2^{P_\delta n_b}}{\beta_\varphi}\right)} + d + P_\delta n_c - 1 \big)\,,\label{eq:numsamp:phi}\\
	K_s &\geq \frac{e}{e-1}\frac{1}{\epsilon_s}\big(\ln{\left(\frac{2^{P_\delta n_b}}{\beta_s}\right)} + d - 1 \big)\,,
\end{align}
\end{subequations}
the solution $\SPapproxoptH{\mathbb{K}}$ of $\:\SPapprox{\mathbb{K}}$ satisfies
\begin{equation*}
	\mathbb{P}^K[V_\varphi(\SPapproxoptH{\mathbb{K}}) > \epsilon_\varphi] \leq \beta_\varphi \text{ and }
	\mathbb{P}^K[V_s(\SPapproxoptH{\mathbb{K}}) > \epsilon_s] \leq \beta_s\,.
\end{equation*}
\end{theorem}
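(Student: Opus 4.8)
The strategy is to first invoke \reflem{lem:violation_probs} to pass from the robust program $\RP$ to the approximate program $\RPapprox$, and then to recognize $\SPapprox{\mathbb{K}}$ as a scenario program with finitely many integer configurations, each of which is convex, so that the generalization results of \cite{frick2018} apply. By \reflem{lem:violation_probs} it suffices to show that the (assumed unique) solution $(\SPapproxoptH{\mathbb{K}}, \SPapproxoptd{\mathbb{K}})$ of $\SPapprox{\mathbb{K}}$ satisfies $\mathbb{P}^K[\widetilde V_\varphi(\SPapproxoptH{\mathbb{K}}, \SPapproxoptd{\mathbb{K}}) > \epsilon_\varphi] \leq \beta_\varphi$ and $\mathbb{P}^K[\widetilde V_s(\SPapproxoptH{\mathbb{K}}, \SPapproxoptd{\mathbb{K}}) > \epsilon_s] \leq \beta_s$, since pointwise $V_\varphi(\SPapproxoptH{\mathbb{K}}) \leq \widetilde V_\varphi(\SPapproxoptH{\mathbb{K}}, \SPapproxoptd{\mathbb{K}})$ and $V_s(\SPapproxoptH{\mathbb{K}}) = \widetilde V_s(\SPapproxoptH{\mathbb{K}}, \SPapproxoptd{\mathbb{K}})$.

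The key observation is that the stacked recourse variable $\greekbf{\delta} \in \greekbf{\Delta} = \Delta^{P_\delta}$ has exactly $P_\delta n_b$ binary and $P_\delta n_c$ continuous components, and that once the binary components are fixed to any $\hat{\mathbf{b}} \in \binaries{P_\delta n_b}$, the constraint functions $\gamma_\varphi(\mathbf{H},\delta_{\set{I}(\mathbf{w})},\mathbf{w})$ and $\gamma_s(\mathbf{H},\mathbf{w})$ are affine in the remaining decision variables — the $d$ free parameters of $\mathbf{H} \in \set{H}$ and the $P_\delta n_c$ continuous components of $\greekbf{\delta}$ — while $J$ is convex in $\mathbf{H}$ by assumption. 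Hence $\SPapprox{\mathbb{K}}$ decomposes into at most $2^{P_\delta n_b}$ convex scenario subproblems indexed by the integer configuration, which is precisely the setting of \cite{frick2018}. The argument there is to apply the explicit convex scenario bound (cf.\ \cite{schildbach2013}) to each configuration, using the support dimension of the corresponding convex subproblem, and then take a union bound over the $2^{P_\delta n_b}$ configurations; the union bound is what replaces the confidence level $\beta$ by $\beta / 2^{P_\delta n_b}$ and thus produces the $\ln(2^{P_\delta n_b}/\beta)$ terms in \eqref{eq:numsamp}.

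It then remains to bound the support dimensions entering each configuration, and here the two constraint groups are treated separately since they use disjoint subsamples and yield independent guarantees. For the specification constraint, each integer-fixed subproblem has $d + P_\delta n_c$ continuous optimization variables, so by Helly's theorem \cite{campi2008,calafiore2010} its support dimension is at most $d + P_\delta n_c$; substituting $\zeta_\varphi = d + P_\delta n_c$ and confidence $\beta_\varphi/2^{P_\delta n_b}$ into the explicit bound $K \geq \frac{e}{e-1}\frac{1}{\epsilon}\big(\ln\frac{1}{\beta} + \zeta - 1\big)$ gives \eqref{eq:numsamp:phi}. For the state-input constraint, $\gamma_s$ depends only on $\mathbf{H}$, so by the support-rank argument of \cite{schildbach2013} its support dimension is at most $d$ in every configuration, which yields the bound on $K_s$. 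Combining the two bounds and applying \reflem{lem:violation_probs} completes the proof.

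The main obstacle is the rigorous reduction to \cite{frick2018}: one must verify that $\SPapprox{\mathbb{K}}$ meets its hypotheses — feasibility and uniqueness of the optimizer for almost every multisample (already assumed) together with convexity of every integer-fixed subproblem — and, crucially, that it is the support dimensions of the \emph{per-configuration} convex subproblems, rather than of the full mixed-integer problem, that enter the bound; the latter cannot be bounded a priori for $\eta_\varphi$ by \refprop{prop:infinite_helly_dimension}. Care is also needed in the accounting that separates the $d + P_\delta n_c$ continuous variables seen by $\gamma_\varphi$ from the $d$ seen by $\gamma_s$, and in noting that the piecewise-constant parametrization of $\delta$ in \eqref{eq:piecewise_delta} is lossless because the continuous components of $\delta$ are binary-valued by the construction in \refsec{sec:mipspec}.
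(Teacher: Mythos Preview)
Your proposal is correct and follows essentially the same route as the paper: fix the $2^{P_\delta n_b}$ binary configurations so each subproblem is convex, apply the mixed-integer scenario results of \cite{frick2018} (which encapsulate the union-bound-over-configurations argument you spell out), bound the per-configuration support dimensions by the number of continuous decision variables appearing in each constraint group ($d+P_\delta n_c$ for $\gamma_\varphi$, $d$ for $\gamma_s$), and then invoke \reflem{lem:violation_probs} to transfer the guarantees back to $V_\varphi$ and $V_s$. The paper's proof is terser---it cites \cite[Theorem~1, Fact~1, Equation~(3)]{frick2018} directly rather than unpacking the union bound---but the logical structure is identical.
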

\begin{proof}
For each fixed binary configuration $\greekbf{\delta}$, $\RPapprox$ is a robust convex program and falls in the class of mixed-integer robust programs considered in \cite{frick2018}.
This is because, for fixed $\mathbf{w}$, the system dynamics are linear, the feedback policy is linear, and the atomic propositions are described by polyhedral constraints, which implies that $\gamma_\varphi$ is affine in $(\mathbf{H}, \delta_{\set{I}(\mathbf{w})})$ for a given $\mathbf{w} \in \setbf{W}$.
Therefore, applying \cite[Theorem~1 and Fact~1]{frick2018} we have
\begin{equation*}
	\mathbb{P}^K[\widetilde{V}_\varphi(\SPapproxoptH{\mathbb{K}}, \SPapproxoptd{\mathbb{K}}) > \epsilon_\varphi ]  \leq 2^{P_\delta n_b} \hspace*{-1.2em} \sum_{i=0}^{d+ P_\delta n_c-1} \hspace*{-0.2em} \binom{K_\varphi}{i} \epsilon_\varphi^i (1-\epsilon_\varphi)^{K_\varphi-i}\,.
\end{equation*}
Furthermore, from \cite[Equation~(3)]{frick2018} we obtain the bound \eqref{eq:numsamp:phi} for $K_\varphi$ in order to satisfy $\mathbb{P}^K[\widetilde{V}_\varphi(\SPapproxoptH{\mathbb{K}}, \SPapproxoptd{\mathbb{K}}) > \epsilon_\varphi ] \leq \beta_\varphi$. The same holds for $\widetilde{V}_s(\SPapproxoptH{\mathbb{K}})$, $K_s$, $\epsilon_s$ and $\beta_s$ with $d$ in place of $d + P_\delta n_c$, where $d + P_\delta n_c$ and $d$ are the total number of continuous variables present in constraints $\gamma_\varphi$ and $\gamma_s$, respectively, and thus upper bound their support ranks.  
The result then follows from \reflem{lem:violation_probs}.
\end{proof}

\begin{remark}\label{rem:tighter_bounds}
The bounds \eqref{eq:numsamp} are only sufficient. Tighter bounds can be obtained by performing a binary search on an implicit criterion outlined in \cite[Theorem~1]{frick2018}.
Moreover, $2^{P_\delta n_b}$ can be replaced by the number of feasible binary configurations, see \cite[Section~2.1]{frick2018}.
\end{remark}

\begin{remark}
	The number of constraints of $\SPapprox{\mathbb{K}}$ grows linearly with the number of decision variables, which depends on the parametrization of the policy, the size and encoding of the specification, see \cite{wolff2014}. Once $\SPapprox{\mathbb{K}}$ has been computed offline, the policy can be evaluated online with negligible computational effort. 
\end{remark}

\begin{algorithm}[h]
	\caption{Policy synthesis, general case} \label{alg:synthesis}
  	\begin{algorithmic}[1]
   		\Require{Policy parametrization $\mathbf{H} \slimcdot \kappa(\mathbf{w})$, parametrization of $\delta$}
   		\Require{Parameters $\epsilon_\varphi, \epsilon_s, \beta_\varphi, \beta_s \in (0,1)$}
   		\State $\mathbb{K} \gets (K_\varphi,K_s)$ according to \refthm{thm:1}
   		\State obtain multisample $\setbf{W}^{K_\varphi+K_s}$ \Comment{e.g. from historical data}
   		\State $(\SPapproxoptH{\mathbb{K}}, \SPapproxoptd{\mathbb{K}}) \gets$ solve $\SPapprox{\mathbb{K}}$ using $\setbf{W}^{K_\varphi+K_s}$
   		\State \Return $\SPapproxoptH{\mathbb{K}}$
  	\end{algorithmic}
\end{algorithm}

We outline the synthesis procedure in \refalg{alg:synthesis}. The solution enjoys the desired probabilistic guarantees provided in \refthm{thm:1}.

\begin{remark}
	MLD systems can be accommodated by parameterizing the discrete variables with causal piece-wise constant functions of the uncertainty. This is similar to how $\delta$ is parameterized, except for the causality of the policy.
\end{remark}
 
\subsection{Linear dependence on the uncertainty w}\label{subsec:linear_in_w}
We now consider the special case, where the dependence on the uncertainty $\mathbf{w}$ is linear.
Instead of solving the scenario program $\SPapprox{\mathbb{K}}$, we use samples of $\mathbf{w}$ to construct an approximation of the support set $\setbf{W}$ and solve a mixed-integer robust optimization problem. The resulting optimization problem can have much fewer constraints providing significant computational advantages over \refalg{alg:synthesis}, while providing similar probabilistic guarantees.

We consider discrete-time dynamics as in \eqref{eq:system_dynamics} with $A$ and $B$  constant, and $c(w)$ affine in $w$. For each atomic proposition $p_i \in \apSet$, $P_i$ is constant and $\rho_i(w)$ is affine in $w$. We only consider piecewise-affine policies, as introduced in \refexa{example:piecewise_affine}. 
As a result, the constraint functions $\gamma_\varphi$, $\gamma_s$ are piecewise-affine in $\mathbf{w}$, with affine pieces $\gamma_{\varphi,i}(\mathbf{H}_i, \delta ,\mathbf{w})$ and $\gamma_{s,i}(\mathbf{H}_i, \mathbf{w})$ defined over the partition $\{\setbf{K}_i\}_{i=1}^P$.

The support set $\setbf{W}$ of $\mathbf{w}$ can be estimated from an i.i.d. multisample $\setbf{W}^{K_w}$ using the scenario approach \cite{margellos2014}, where $K_w$ is the number of samples. We denote by $\Wapprox{K_w} := \{ \mathbf{w} \in \reals{(N+1)n_w} \sep{} \mathbf{W}\mathbf{w} \leq \mathbf{v} \}$, the polyhedral estimate of $\setbf{W}$ obtained from $\setbf{W}^{K_w}$, with $\mathbf{W} \in \reals{m \times (N+1)n_w}$ and $\mathbf{v} \in \reals{m}$. The estimate $\Wapprox{K_w}$ is computed by finding the parameters $\mathbf{W}$, $\mathbf{v}$ such that $\Wapprox{K_w}$ is small, in some appropriate sense, and such that all samples of $\setbf{W}^{K_w}$ are contained in $\Wapprox{K_w}$. When the matrix $\mathbf{W}$ is fixed, this reduces to solving a linear program. Note that \cite{margellos2014} generalizes to the case where the estimate $\Wapprox{K_w}$ is a finite union of polyhedra.

We assume that the disturbance feedback policy $\mathbf{u}(\cdot)$ is defined over a polyhedral partition $\{\setbf{K}_i\}_{i=1}^P$ of $\Wapprox{K_w}$ and, for simplicity, $\delta(\mathbf{w})$ is parametrized over the same partition. Then, $\RPapprox$ can be solved approximately via the following robust optimization problem
\begin{equation*} 
	\RPlin{K_w}\left\{ \begin{aligned}
		\min_{\mathbf{H} \in \set{H},\, \greekbf{\delta}\in \greekbf{\Delta}} & \:\, J(\mathbf{H}) \\[-1em]
	\suchthat & \hspace*{-0.5em}\max_{i=1,\ldots,P} \max_{\mathbf{w} \in \setbf{K}_i} \max_j \begin{bmatrix}\gamma_{\varphi,i}(\mathbf{H}_i, \delta_i, \mathbf{w}) \\ \gamma_{s,i}(\mathbf{H}_i, \mathbf{w})\end{bmatrix}_j \leq 0\,.
	\end{aligned} \right.
\end{equation*}
Let $(\RPlinoptH{K_w}, \RPlinoptd{K_w})$ be the optimizer of $\RPlin{K_w}$, which is a random variables because it depends on the estimate $\Wapprox{K_w}$ which depends on the multisample $\setbf{W}^{K_w}$.
Note that in this case the domain $\set{S}$ in \eqref{eq:polyhedral_set} can be set to $\set{S} := \Wapprox{K_w}$, without loss of generality.

The robust program $\RPlin{K_w}$ can be transformed into a mixed-integer program and solved using off-the-shelf solvers, by dualizing the robust constraint, as in \cite{frick2017b}. This is possible, because $\setbf{K}_i$ are polyhedra and $\gamma_{\varphi,i}, \gamma_{s,i}$ are affine in $\mathbf{w}$.
Furthermore, the construction of $\Wapprox{K_w}$ allows us to give the following probabilistic guarantees for the solution $\RPlinoptH{K_w}$ of $\RPlin{K_w}$, relating it to the original robust program $\RP$.
\begin{theorem} \label{thm:robustlin}
	Let $N$ be the planning horizon, $n_w$ the dimension of the uncertainty and $m$ the number of inequalities of $\Wapprox{K_w}$. Let $\epsilon, \beta \in (0,1)$ be the desired violation and confidence parameter, respectively. For $K_w$ such that
	\begin{equation*}
		K_w \geq \frac{e}{e-1}\frac{1}{\epsilon}\big(\ln \left(\frac{1}{\beta}\right) + m(N+1)n_w + m - 1 \big)\,,
	\end{equation*}
	the solution $\RPlinoptH{K_w}$ of $\RPlin{K_w}$ satisfies
	\begin{equation*}
		\mathbb{P}^{K_w}[V_\varphi(\RPlinoptH{K_w}) > \epsilon] \leq \beta \text{ and }
		\mathbb{P}^{K_w}[V_s(\RPlinoptH{K_w}) > \epsilon] \leq \beta\,.
	\end{equation*}
\end{theorem}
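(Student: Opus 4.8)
The plan is to exploit the defining feature of $\RPlin{K_w}$: it enforces the constraints of $\RP$ robustly over the \emph{whole} polyhedral estimate $\Wapprox{K_w}$, so that any failure of the synthesized policy can only be caused by a disturbance realization landing outside $\Wapprox{K_w}$, and the probability of that event is already controlled by the scenario guarantees attached to the construction of $\Wapprox{K_w}$. Concretely, since $\{\setbf{K}_i\}_{i=1}^P$ is a partition of $\Wapprox{K_w}$, the single robust constraint of $\RPlin{K_w}$ is equivalent to requiring, for every $i$, that $\gamma_{\varphi,i}(\cdot)\le 0$ and $\gamma_{s,i}(\cdot)\le 0$ hold on all of $\setbf{K}_i$ at the optimizer $(\RPlinoptH{K_w},\RPlinoptd{K_w})$. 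Hence for every $\mathbf{w}\in\Wapprox{K_w}$, taking $i=\set{I}(\mathbf{w})$, the corresponding affine piece and recourse block certify $\eta_\varphi(\RPlinoptH{K_w},\mathbf{w})\le 0$ by \eqref{eq:specconst} — exactly as in the argument of \reflem{lem:violation_probs} relating $\RPapprox$ to $\RP$ — and likewise $\gamma_s(\RPlinoptH{K_w},\mathbf{w})\le 0$. Therefore the policy $\RPlinoptH{K_w}$ can violate the specification constraint or the state–input constraint of $\RP$ only at realizations outside $\Wapprox{K_w}$, so
\[
	V_\varphi(\RPlinoptH{K_w}) \le \mathbb{P}\{\mathbf{w}\in\setbf{W}\sep{} \mathbf{w}\notin\Wapprox{K_w}\} =: V_W, \qquad V_s(\RPlinoptH{K_w}) \le V_W .
\]

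It remains to bound $V_W$. The estimate $\Wapprox{K_w}=\{\mathbf{w}\sep{}\mathbf{W}\mathbf{w}\le\mathbf{v}\}$ is the optimizer of a convex scenario program whose decision variables are the $m(N+1)n_w$ entries of $\mathbf{W}$ together with the $m$ entries of $\mathbf{v}$, whose size-minimizing objective is convex, and whose $K_w$ sampled constraints $\mathbf{W}\mathbf{w}^{(k)}\le\mathbf{v}$ are linear, hence convex, in $(\mathbf{W},\mathbf{v})$; it is feasible (any sufficiently large box around $\setbf{W}^{K_w}$ is admissible) and its optimizer is assumed unique (or rendered so by a tie-breaking rule). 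The violation event of this scenario program is precisely $\{\mathbf{w}\notin\Wapprox{K_w}\}$, with violation probability $V_W$. Applying the scenario result of \cite{margellos2014} together with the explicit sample-size estimate already used in \refthm{thm:1} (see \cite{frick2018}, cf. \cite{campi2008,schildbach2013}), with the number of decision variables $d=m(N+1)n_w+m$, gives that for $K_w$ satisfying the bound in the statement,
\[
	\mathbb{P}^{K_w}[V_W > \epsilon] \le \beta .
\]

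Chaining the two displays yields $\mathbb{P}^{K_w}[V_\varphi(\RPlinoptH{K_w})>\epsilon]\le\mathbb{P}^{K_w}[V_W>\epsilon]\le\beta$, and identically for $V_s$, which is the claim. The delicate point is the middle step: one has to confirm that the support-set estimation of \cite{margellos2014} genuinely meets the standing scenario-approach assumptions (feasibility, uniqueness, and convexity) \emph{when $\mathbf{W}$ is itself optimized} rather than held fixed, and to count its free parameters correctly as $m(N+1)n_w+m$ so that the standard explicit bound reproduces the stated value of $K_w$. The remaining steps — the containment $V_\varphi,V_s\le V_W$ and the final monotonicity chain — are routine.
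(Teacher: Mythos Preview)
Your proposal is correct and follows essentially the same route as the paper: both arguments reduce the violation events of $\RP$ to the event $\{\mathbf{w}\notin\Wapprox{K_w}\}$ (the paper routes this through \reflem{lem:violation_probs} and $\widetilde{V}_\varphi$, while you argue it directly and then cite that lemma), and then invoke the scenario bound of \cite{margellos2014} with the decision-variable count $m(N+1)n_w+m$ to obtain the stated sample size. Your exposition is in fact more detailed than the paper's, which compresses the chain into a single displayed inequality and a reference to \cite{margellos2014}.
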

\begin{proof}
	The proof follows by \reflem{lem:violation_probs} and construction of $\Wapprox{K_w}$ according to \cite{margellos2014}. We have that
	\begin{align*}
		\mathbb{P}^{K_w}[V_\varphi(\RPlinoptH{K_w}) > \epsilon] &\leq \mathbb{P}^{K_w}[\widetilde{V}_\varphi(\RPlinoptH{K_w}, \RPlinoptd{K_w}) > \epsilon ]  \\& \leq \mathbb{P}^{K_w}[\mathbf{w} \notin \setbf{W}[K_w] \,]\leq  \beta\,.
	\end{align*}
	The proof for $V_s(\RPlinoptH{K_w})$ follows in the same way.
\end{proof}

Note that \refrem{rem:tighter_bounds} holds analogously for \refthm{thm:robustlin}.

\begin{algorithm}[h]
	\caption{Policy synthesis, linear dependence on $\mathbf{w}$} \label{alg:synthesisLinear}
  	\begin{algorithmic}[1]
   		\Require{Piece-wise affine policy parametrization $\mathbf{H} \slimcdot \kappa(\mathbf{w})$, parametrization of $\delta$}
   		\Require{Violation and confidence parameters $\epsilon, \beta \in (0,1)$}
   		\State $K_w$ according to \refthm{thm:robustlin}
   		\State obtain multisample $\setbf{W}^{K_w}$ \Comment{e.g. from historical data}
   		\State $\Wapprox{K_w} \gets$ estimate from $\setbf{W}^{K_w}$ using \cite{margellos2014} 
   		\State $\RPlinoptH{K_w} \gets$ solve $\RPlin{K_w}$ using $\Wapprox{K_w}$
   		\State \Return $\RPlinoptH{K_w}$
  	\end{algorithmic}
\end{algorithm}

We outline the synthesis procedure in \refalg{alg:synthesisLinear}. The resulting solution enjoys the desired probabilistic guarantees for $\RP$ provided in \refthm{thm:robustlin}. These guarantees are exactly equivalent to the guarantees for $\SPapprox{\mathbb{K}}$ when $\epsilon = \epsilon_\varphi = \epsilon_s$ and $\beta = \beta_\varphi = \beta_s$.

\section{Case studies} \label{sec:numerics}
To illustrate our theoretical results and the practical performance of the proposed approach, we examine two simple motion planning case studies.
In case study~\ref{case_study_1}, we consider dealing with a turning truck. This illustrates the nonlinear dependence of the LTL specification on the uncertainty and shows that the policies obtained from \refalg{alg:synthesis} avoid the truck, satisfying the probabilistic guarantees of \refthm{thm:1}. 
In case study~\ref{case_study_2} we consider an overtaking maneuver. In this case, the dependence on $\mathbf{w}$ is linear, which allows us to find piecewise affine control policies using \refalg{alg:synthesisLinear} with significantly reduced computational effort compared to \refalg{alg:synthesis}. We demonstrate that the performance of the policy improves with a higher number of partition elements. The advantage of using feedback policies is illustrated in both of the case studies.
The code used for the two case studies is made available on \href{https://github.com/maryamka2018/HSCC2018Code}{github:maryamka2018}.

The controlled car is modeled as a double-integrator:
\begin{equation} \label{eqn:doubleint}
	\begin{bmatrix}\dot{x}_1 & \dot{x}_2\end{bmatrix}^\transp = \begin{bmatrix}x_3 & x_4\end{bmatrix}^\transp\,, \text{ and } \begin{bmatrix}\dot{x}_3 & \dot{x}_4\end{bmatrix}^\transp = \begin{bmatrix}u_1 & u_2\end{bmatrix}^\transp,
\end{equation}
where the state $x \in \reals{4}$ contains the position $(x_1,x_2)$ of the car and the corresponding velocities $(x_3,x_4)$. 
The accelerations $u = (u_1,u_2)$ are the control inputs and are subject to coupling constraints $\set{U}:= \{  (u_1,u_2) \in \reals{2} \sep{} \| \big[ \begin{smallmatrix}3.5 & 0 \\ 0 & 5 \end{smallmatrix}\big]^{-1} (u+\big[\begin{smallmatrix}\unitfrac[6.5]{m}{s^2} \\ \unitfrac[0]{m}{s^2}\end{smallmatrix}\big]) \|_1 \leq 1 \}$. Similarly, the velocities $x_3,x_4$ are coupled through the constraint set
\begin{equation} \label{eq:velocityConstraints}
	\set{X}:=\{ x \in \reals{4} \sep{}  \| \big[\begin{smallmatrix}v_1& 0 \\ 0 & v_2 \end{smallmatrix}\big]^{-1} \big(\big[\begin{smallmatrix} x_3 \\ x_4 \end{smallmatrix}\big]-\bar{v}\big) \|_1 \leq 1 \}\,,
\end{equation}
where $\bar{v}$, $v_1$ and $v_2$ depend on the speed limits for each task. Lane constraints will be included in $\set{X}$. The car has length $\unit[4.5]{m}$ and width $\unit[2]{m}$. In both case studies they are added to the obstacle dimensions and the car is treated via a point model.
A discrete-time version of \eqref{eqn:doubleint} with sampling time $T_s = 0.4$~seconds and a planning horizon of $N=10$ ($4$~seconds) is used for both case studies.
All computations were carried out on an Intel~i5 CPU at $\unit[2.8]{GHz}$ with $\unit[8]{GB}$ of memory, using YALMIP \cite{lofberg2004} and CPLEX \cite{cplex} as a mixed-integer problem solver. We used the default feasibility tolerance of $10^{-6}$, which was also used to compute the empirical violation probabilities.

\subsection{Avoiding a turning truck}\label{case_study_1}
The first example is illustrated in \reffig{numerics:fig:truck_time_evolution}. The car is driving with an initial forward velocity of $\unitfrac[50]{km}{h}$ on the right lane of a two-way street. A truck is coming from the other direction starting to turn left into a side street. 
The trajectory of position and orientation, \emph{the pose}, of the truck is uncertain. It follows unicycle dynamics,
\begin{equation} \label{eqn:unicycle}
	\begin{bmatrix}\dot{y}_1 & \dot{y}_2 & \dot{\theta} \end{bmatrix}^\transp =  \begin{bmatrix} v_t \cos(\theta) & v_t\sin(\theta) & \omega \end{bmatrix}^\transp\,,
\end{equation}
with a constant forward velocity $v_t = \unitfrac[22]{km}{h}$ and known initial pose $(\unit[44]{m},\unit[1.75]{m},0^\circ)$. The angular velocity $\omega$ enters through a zero-order hold and at time $k$ follows the distribution
\begin{equation*}
	\omega_k \sim \unitfrac[\uniform\big(\frac{\pi-0.66}{2(N+1)} , \min\big\{ \frac{\pi + 0.66}{2(N+1)}, \frac{\pi}{2} - \sum_{j=0}^{k-1} \omega_j \big\} \big)]{rad}{T_s}\,.
\end{equation*}
This means that the truck completes a $90^\circ$ counter-clockwise turn by the end of the planning horizon $N$. The disturbance $w_k = [w_{1,k}, w_{2,k}, w_{3,k}]^\transp := [y_{1,k}, y_{2,k}, \theta_k]^\transp$ is therefore \emph{correlated} over time and the support set $\setbf{W}$ is non-convex.

The goal is to maximize the expected terminal forward position of the car, i.e., to maximize $\mathbb{E}_{\mathbf{w}} [ x_{1,N}]$, while satisfying the LTL specification $\varphi:= \always \lnot p_{truck}$. The atomic proposition $p_{truck}$ encodes hitting the truck and is defined as in \refexa{exa:prop}, with $b = [\unit[9]{m},\unit[2.5]{m}]^\transp$. The uncertainty enters nonlinearly into the atomic proposition, due to the rotation matrix $\mathcal{R}(w_3)$.
Finally, we enforce state-input constraints, which include lane limits, velocity constraints \eqref{eq:velocityConstraints} with $\bar{v} = [\unitfrac[40]{km}{h}, \unitfrac[0]{km}{h}]^\transp $, $v_1= \unitfrac[40]{km}{h} $, and $v_2 = \unitfrac[20]{km}{h}$ and acceleration constraints $\set{U}$.

We consider affine feedback policies $\mathbf{u}(\mathbf{w}) = \mathbf{H} \cdot [\begin{smallmatrix}1 \\ \mathbf{w}\end{smallmatrix}]$ and a constant parametrization of $\delta$, as in \eqref{eq:piecewise_delta} with $P_\delta=1$. To reduce the computational burden we enforce a block-banded structure for $\mathbf{H}$ via $\set{H}$, so that the inputs at every time step depend only on the past three poses of the truck. The desired violation parameters are $\epsilon_\varphi = 0.05$, $\epsilon_s = 0.3$ and the confidence parameters are $\beta_\varphi = \beta_s = 10^{-3}$. We have obtained $K=6321$ i.i.d. truck trajectories which are allocated as $K_\varphi = 5454$, $K_s = 867$ according to \refthm{thm:1} and \refrem{rem:tighter_bounds}, and ensure the desired probabilistic guarantees.
The cost $J(\mathbf{H})$ is formulated using the sample average and is linear in $\mathbf{H}$. The scenario program $\SPapprox{(K_\varphi,K_s)}$ is a mixed-integer linear optimization problem with 253'016 continuous variables, 40 binary variables and 629'580 constraints. Instead of using the form~\eqref{eq:state_evolution}, we introduced optimization variables for the states. This led to a larger but sparser problem, requiring less time to solve. 
We solved 15 different instances of $\SPapprox{(K_\varphi,K_s)}$, each for a different multisample, taking 63~minutes on average. The resulting policies lead to trajectories where the car breaks just enough to avoid hitting the truck, in spite of its uncertain motion. This is illustrated in \reffig{numerics:fig:truck_time_evolution} for the policy $\SPapproxoptH{(K_\varphi,K_s)}$ of the first instance and a new random realization $\mathbf{w}$.

\begin{figure}
	\centering
	\begin{subfigure}[b]{0.8\columnwidth}
		\centering
		\includegraphics[width=\linewidth]{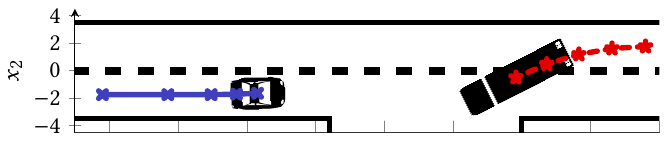}
	\end{subfigure}\\
	\vspace*{-0.5em}
	\begin{subfigure}[b]{0.8\columnwidth}
		\centering
		\includegraphics[width=\linewidth]{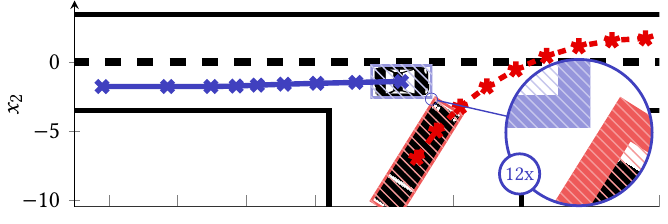}
	\end{subfigure}\\
	\vspace*{-0.5em}
	\hspace*{0.35em}
	\begin{subfigure}[b]{0.815\columnwidth}
		\centering
		\includegraphics[width=\linewidth]{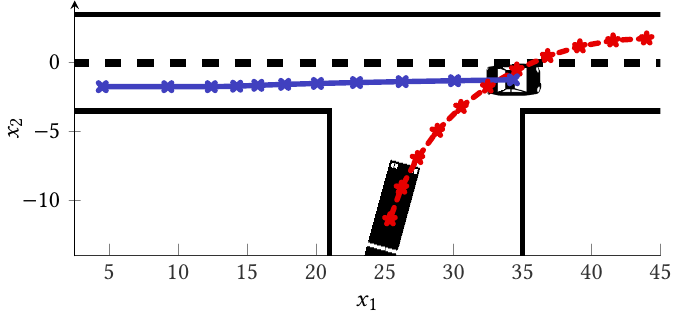}
	\end{subfigure}
	\vspace{-1em}
	\caption{The controlled car ({\color{blue}solid}) reacting to a new random truck trajectory ({\color{red}dashed}) using the feedback policy $\SPapproxoptH{(K_\varphi,K_s)}$, pictured are time steps $k=4,8$ and $10$.}
	\label{numerics:fig:truck_time_evolution}
\end{figure}

\begin{figure}
	\centering
	\vspace*{-1em}
	\begin{subfigure}[b]{0.73\columnwidth}
		\centering%
		\includegraphics[width=\linewidth]{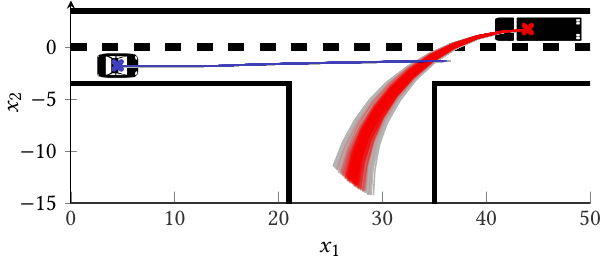}
		\caption{}
		\label{numerics:fig:use_of_feedback}
	\end{subfigure}
	\begin{subfigure}[b]{0.25\columnwidth}
		\centering
		\includegraphics[width=\linewidth]{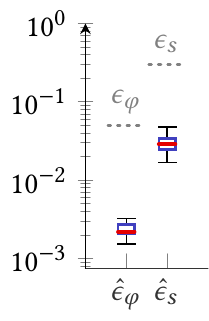}
		\caption{}
		\label{numerics:fig:empirical_violations}
	\end{subfigure}
	\vspace{-1em}
	\caption{a) Reactions to 10'000 new random truck trajectories. b) Empirical violation probabilities over 15 instances of $\SPapprox{(K_\varphi,K_s)}$, {\color{red}median}, $25$-th to $75$-th {\color{blue}percentile}, guaranteed violation probabilities $\epsilon_\varphi$, $\epsilon_s$ ({\color{gray}dotted}).}
	\vspace{-1em}
\end{figure}

We have evaluated the policy $\SPapproxoptH{(K_\varphi,K_s)}$ for 10'000 new random truck trajectories. The resulting car trajectories are given in \reffig{numerics:fig:use_of_feedback} and show that the car adapts its speed to the pose of the truck. The terminal position of the car depends on the truck trajectory and varies between $\unit[34.3]{m}$ and $\unit[36.6]{m}$.
For each of the 15 instances the empirical violation probabilities $\hat{\epsilon}_\varphi$, $\hat{\epsilon}_s$ are computed for the same 10'000 new random truck trajectories. Their distribution is illustrated in \reffig{numerics:fig:empirical_violations}. Indeed, as expected from \refthm{thm:1}, they conservatively satisfy $\epsilon_\varphi$ and $\epsilon_s$.  

\subsection{Safe overtaking}\label{case_study_2}
In the second task, the car is driving at $\unitfrac[100]{km}{h}$ on the left lane of a three-lane road and wants to overtake a truck which is driving in the middle lane. The position of the truck is uncertain and it may change lanes. 
The truck follows single-integrator dynamics
\begin{equation} \label{eqn:singleint}
	\begin{bmatrix}\dot{y}_1 & \dot{y}_2 \end{bmatrix}^\transp =  \begin{bmatrix} v_f  & v_l \end{bmatrix}^\transp,
\end{equation}
where $(y_1,y_2)$ is the uncertain position of the truck. The forward velocity $v_f = \unitfrac[100]{km}{h}$ is known and constant. Furthermore, the initial position is uncertain, with $y_{1,0} \sim \uniform (\unit[13.5]{m},\unit[17.5]{m})$ and $y_{2,0} \sim T_s\uniform (\minus\unitfrac[3.75]{km}{h}, \unitfrac[3.75]{km}{h}) - \unit[1.75]{m}$.
Moreover, the lateral velocity $v_l$, for $k=0,\ldots,N$, follows the distribution 
\begin{equation*}
	 v_{l,k} \sim \begin{cases}
	 	\uniform (\unitfrac[0]{km}{h}, \unitfrac[3.75]{km}{h} ) & \text{if } y_{2,0} \geq \unit[-1.75]{m}\,, \\
	 	\uniform (\minus\unitfrac[3.75]{km}{h},\unitfrac[0]{km}{h} ) & \text{otherwise}\,,
	 \end{cases}
\end{equation*}
This means that the truck does not change the direction of its lateral motion, once it has decided whether to go left or right.
Hence, as in the previous example, the disturbance states $w_k = [w_{1,k}, w_{2,k}]^\transp := [ y_{1,k}, y_{2,k}]^\transp$ are \emph{correlated} in time and the support set $\setbf{W}$ is non-convex.
As in the previous task, the car wants to maximize the expected terminal forward position, which is approximated by the sample average, while satisfying the LTL specification $\varphi:= \always \lnot p_{truck}$. In this case, $p_{truck}$ is described as in \refsec{case_study_1} without rotation, i.e., for $w_3 = 0^\circ$.
Lane limits, velocity constraints \eqref{eq:velocityConstraints} with $\bar{v} = [\unitfrac[110]{km}{h}, \unitfrac[0]{km}{h}]^\transp $, $v_1= \unitfrac[20]{km}{h} $, and $v_2 = \unitfrac[20]{km}{h}$, and acceleration constraints $\set{U}$ are also enforced. The allowable violation is $\epsilon = 0.05$ with confidence parameters $\beta = 10^{-3}$.

We consider piecewise-affine policies, as in \refexa{example:piecewise_affine}, and restrict the parametrization of $\delta$ to be defined over the same partition as the policy $\mathbf{u}(\cdot)$ for simplicity. Therefore, the problem depends linearly on $\mathbf{w}$ and falls into the class considered in \refsec{subsec:linear_in_w}.

\begin{figure*}
	\centering
	\begin{subfigure}[b]{0.33\textwidth}
		\centering
		\includegraphics[width=\linewidth]{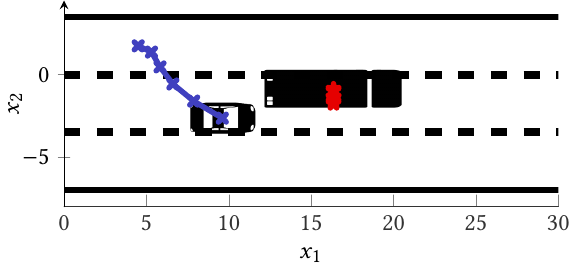}
	\end{subfigure}
	\begin{subfigure}[b]{0.307\textwidth}
		\centering
		\includegraphics[width=\linewidth]{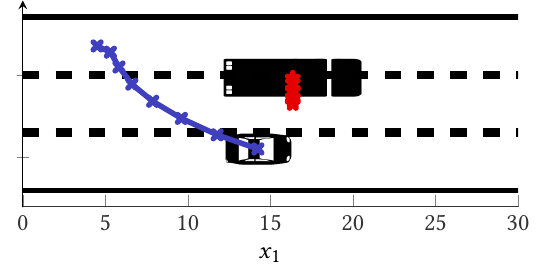}
	\end{subfigure}
	\begin{subfigure}[b]{0.307\textwidth}
		\centering
		\includegraphics[width=\linewidth]{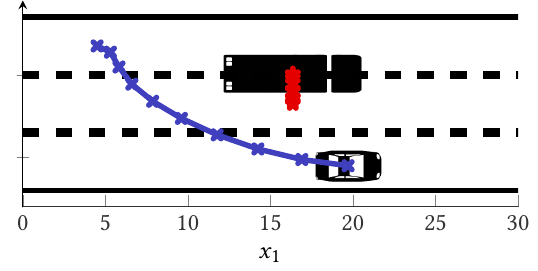}
	\end{subfigure}
	\vspace*{-1em}
	\caption{The controlled car ({\color{blue}solid}) reacting to a new random truck trajectory ({\color{red}dashed}) using feedback policy $\RPlinoptH{K_w}$, pictured at time steps $k=5,7$ and $9$. The coordinate system is relative to a reference frame moving with $\unitfrac[100]{km}{h}$.}
	\label{numerics:fig:overtake_time_evolution}
\end{figure*}

\begin{figure}
	\centering
	\begin{subfigure}[b]{0.7\columnwidth}
		\centering%
		\includegraphics[width=\linewidth]{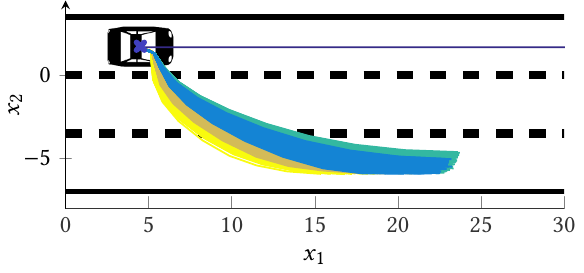}
		\vspace*{-1.7em}
		\caption{}
		\label{numerics:fig:overtake_1000_new_realiz}
	\end{subfigure}
	\hspace*{-1.06em}
	\begin{subfigure}[b]{0.318\columnwidth}
		\centering
		\includegraphics[width=\linewidth]{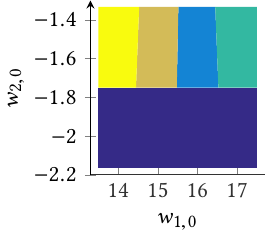}
		\vspace*{-1.7em}
		\caption{}
		\label{numerics:fig:overtake_partition}
	\end{subfigure}
	\vspace*{-2.3em}
	\caption{a) Car trajectories for 10'000 new random truck trajectories. The colors correspond to the different pieces of the policies for the partition in b).}
	\vspace*{-1em}
\end{figure}

Following \refalg{alg:synthesisLinear}, we estimate the support set $\setbf{W}$ as a union of two polyhedral sets $\setbf{W}[K_w] := \setbf{W}_1[K_w] \cup \setbf{W}_2[K_w]$, where the two sets are separated by a hyperplane $\{\mathbf{w} \sep{} w_{2,1} \geq w_{2,0} \}$ that partitions $\reals{(N+1)n_w}$. This separation can be identified from samples, e.g. using support vector machines, and it distinguishes the truck going left from it going right. More precisely, we define $\setbf{W}_1[K_w]:= \{ \mathbf{w} \sep{} w_{2,1} \leq w_{2,0}\,,\, \underbar{\mathbf{w}}_1 \leq \mathbf{w} \leq \overbar{\mathbf{w}}_1\}$ and $\setbf{W}_2[K_w]:= \{ \mathbf{w} \sep{} w_{2,1} > w_{2,0}\,,\, \underbar{\mathbf{w}}_2 \leq \mathbf{w} \leq \overbar{\mathbf{w}}_2\}$. We select $K_w=2381$ according to \refthm{thm:robustlin} and \refrem{rem:tighter_bounds}, and identify the bounds $\underbar{\mathbf{w}}_1$, $\overbar{\mathbf{w}}_1$, $\underbar{\mathbf{w}}_2$ and $\overbar{\mathbf{w}}_2$ using the scenario approach, as in \cite{margellos2014}.

Based on the estimate $\setbf{W}[K_w]$ we define the partition $\{\set{K}_i\}_{i=1}^P$ for the policy. For $P = 1$, we choose $\set{K}_1$ as the smallest box containing $\setbf{W}[K_w]$. For $P=2$ we select $\set{K}_1 := \setbf{W}_1[K_w]$ and $\set{K}_2 := \setbf{W}_2[K_w]$. Partitions with more elements are generated by further partitioning $\setbf{W}_1[K_w]$ and $\setbf{W}_2[K_w]$. This can be achieved following the iterative partitioning scheme in \cite{postek2016}. For each element of the partition we find the most binding scenarios $\mathbf{w}$ by solving a linear program for each constraint row. Then, we divide the element into two pieces using a splitting hyperplane that separates the most distant binding scenarios using \cite[Heuristic~1]{postek2016}. In general, \emph{non-anticipativity} constraints need to be added to enforce causality of the policy $\mathbf{u}(\cdot)$. However, we partition using 1-splitting hyperplanes \cite[Section~5.1]{postek2016} which ensures causality without additional constraints. In effect, only the uncertain initial position $w_0=[w_{1,0}, w_{2,0}]$ of the truck is partitioned. Furthermore, only $\setbf{W}_2[K_w]$ needs to be partitioned, since the policy cannot be improved by further partitioning $\setbf{W}_1[K_w]$.
We consider partitions with $P=3,5,9$ elements, where $\set{K}_1 := \setbf{W}_1[K_w]$ and $\{\set{K}_i\}_{i=2}^P$ is a partition of $\setbf{W}_2[K_w]$ constructed as outline above. The estimate $\setbf{W}[K_w]$ and the partitions are obtained in $0.31$, $0.31$, $2.73$, $7.99$, and $28.49$ seconds for $P = 1,2,3,5$ and $9$, respectively.

\begin{figure}
	\centering
	\includegraphics[width=0.45\columnwidth]{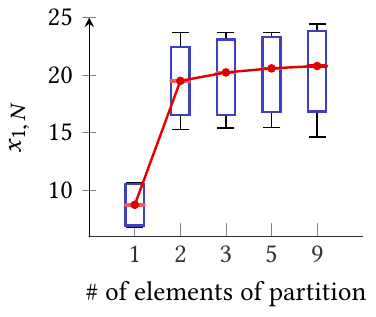}
	\vspace*{-1em}
	\caption{Distributions of terminal forward position $x_{1,N}$ for 10'000 new trajectories of the truck moving left and policies with $P=1,2,3,5$, and $9$ elements. The mean ({\color{red}red}) and $5$-th to $95$-th percentile ({\color{blue}blue}).}
	\label{numerics:fig:overtake_objective}
\end{figure}

For a policy with $P=5$ we solve $\RPlin{K_w}$, a linear program with 37'680 continuous variables, 200 binary variables, and 54'930 constraints. The optimal policy $\RPlinoptH{K_w}$ was obtained in $19.18$~seconds. In contrast, \refalg{alg:synthesis} exceeded the available memory.
The performance is illustrated in \reffig{numerics:fig:overtake_time_evolution} for a new trajectory of the truck and shows a successful overtaking maneuver. Note that we depict the car relative to a reference frame moving with $\unitfrac[100]{km}{h}$.
We additionally evaluated the policy $\RPlinoptH{K_w}$ for 10'000 random truck trajectories. The empirical violation probability is $\hat{\epsilon} = 9 \cdot 10^{-4}$. The resulting trajectories of the car are shown in \reffig{numerics:fig:overtake_1000_new_realiz}, illustrating how the car adapts to different truck positions. The partitioning of $\setbf{W}[K_w]$ is shown in \reffig{numerics:fig:overtake_partition}. When the truck moves right, $\RPlinoptH{K_w}$ leads to a trivial maneuver with terminal position $\unit[32.6]{m}$ for any realization of the uncertainty. When the truck moves left, the terminal position achieves values between $\unit[15.3]{m}$ and $\unit[23.7]{m}$, depending on the specific uncertainty realization.

\reffig{numerics:fig:overtake_objective} shows the distributions of the terminal positions of the car for 10'000 trajectories of the truck moving left, as a function of $P$. The average terminal position, as well as the $5$-th and $95$-th percentile marks are shown and increase with larger number of partitions $P$.
This illustrates that richer policies allow the car to adapt better to the possible scenarios and improve the cost. Note that for $P=1$ the only safe maneuver for the car is to not overtake, while for $P \geq 2$ the car can overtake safely. Moreover, there does not exist an open-loop policy that safely overtakes the truck in most cases, due to the uncertainty of the motion of the truck.

\section{Conclusions}
We have addressed the problem of synthesizing disturbance feedback policies for systems satisfying LTL specifications with uncertainty in the dynamics and the specification. In particular, we have formulated an optimization problem where the specification and operational constraints are to be fulfilled robustly, i.e., for all disturbances. We have proposed a data-driven approach that only requires sample realizations of the uncertain variables to obtain policies that satisfy the constraints with probabilistic guarantees.

The policies can be computed as solutions to mixed-integer programs. We have provided bounds on the number of samples needed such that the probability of violating the system and specification constraints are separately limited with given confidence parameters. Moreover, we have considered the special case where the disturbance enters linearly. For this case, we estimate the support set of the uncertainty and solve a simpler robust mixed-integer optimization problem that provides similar probabilistic guarantees.

We have demonstrated the performance of the method in two autonomous driving examples. Theses case studies show the applicability of the scenario approach to provide probabilistic constraint satisfaction guarantees based on sampled uncertainty data. Furthermore, we have illustrated the benefit of incorporating disturbance feedback to deal with unseen instances of uncertainty in real-time, improving feasibility and performance. 

\appendix

\section{Proof of Proposition~4.4} \label{app:proofs}
\begin{proof}[Proof of \refprop{prop:infinite_helly_dimension}]
Inspired by \cite[Fig.~8]{campi2018}, we will show that there exist robust programs of form $\RP$ such that for any given $K_\varphi$, all sampled constraints $\eta_\varphi(\cdot, \mathbf{w}^{(k)})$, $k=1,\ldots,K_\varphi$, may be supporting for $\SP{\mathbb{K}}$. Then, by \refdef{def:supportdim} we have $\zeta_\varphi = K_\varphi$.
We consider a particular robust program with $\mathbf{H}=[h_1, h_2] \in \set{H} := \{ \mathbf{H} \in \reals{2} \sep{} 0 \leq h_1 \leq 1 \}$ and linear objective function $J(\mathbf{H}) = h_2$. There is no constraint $\gamma_s(\mathbf{H},\mathbf{w})$, we therefore set $K_s = 0$ without loss of generality. We define the constraint function
\begin{equation*}
	\eta_\varphi(\mathbf{H},\mathbf{w}) := \begin{cases} h_1 - w_1 -h_2 & \text{if } h_1 \leq w_1 + w_2\,,\\
	w_1 + 2w_2 - h_1 -h_2 & \text{if } h_1 \geq w_1 + w_2\,, \end{cases}
\end{equation*}
which is piecewise-affine concave in $\mathbf{H}$.
The uncertainty $\mathbf{w} = (w_1,w_2) \in \reals{2}$ is distributed uniformly over $\setbf{W} := \{(w_1,w_2) \in \reals{2} \sep{} -2 \leq w_i \leq 2 , i=1,2 \}$.
For each $K_\varphi$ we can define the following set of multisamples
\begin{align*}
	\bm{\mathfrak{W}}^{K_\varphi} := \big\{ \setbf{W}^{K_\varphi} \in \setbf{W} \sep{} & w_1^{(k)} \in \{\tfrac{4k-5}{4K_\varphi}\} + ( -\tfrac{1}{16 K_\varphi} , \tfrac{1}{16 K_\varphi})\,,\\[-0.2em]
	& \hspace*{-1em} w_2^{(k)} \in \{\tfrac{1}{K_\varphi}\} + (0 , \tfrac{1}{8K_\varphi}) \text{ for } k=1,\ldots,K_\varphi \big\}\,.
\end{align*}
Notice that $\bm{\mathfrak{W}}^{K_\varphi}$ is non-zero measure in $\setbf{W}$. Furthermore, it can be verified that for any $\setbf{W}^{K_\varphi} \in \bm{\mathfrak{W}}^{K_\varphi}$ all the constraints of $\SP{\{K_\varphi,0\}}$ are supporting.
The feasible region corresponding to one such multisample is depicted in \reffig{Figure_PWA_support_constraints} for $K_\varphi=3$. In the three subfigures, we illustrate that all constraints are supporting and indicate the new optimizer obtained when each of them is removed.
This means that for any $K_\varphi \geq 1$, $\SP{(K_\varphi,0)}$ has support dimension $\zeta_\varphi = K_\varphi$ with non-zero probability. This concludes the proof.
\begin{figure}[ht]
	\centering
	\begin{subfigure}[b]{0.6\columnwidth}
		\includegraphics[width=\linewidth]{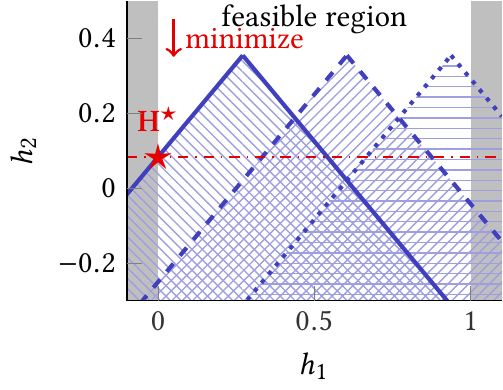}
	\end{subfigure}
	\begin{minipage}[b]{0.2\columnwidth}
	\centering
	\begin{subfigure}[b]{\columnwidth}
		\centering
		\includegraphics[width=\linewidth]{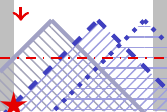}
	\end{subfigure}\\
	\begin{subfigure}[b]{\columnwidth}
		\centering
		\includegraphics[width=\linewidth]{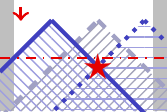}
	\end{subfigure}\\
	\begin{subfigure}[b]{\columnwidth}
		\centering
		\includegraphics[width=\linewidth]{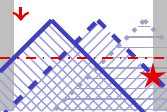}
	\end{subfigure}
	\vspace*{0.33em}
	\end{minipage}
	\vspace*{-1em}
	\caption{Feasible region of $\SP{(3,0)}$ with a multisample $\setbf{W}^{3} \in \mathbb{\mathbf{W}}^{3}$, illustrating that all constraints are supporting.}
	\label{Figure_PWA_support_constraints}
	\vspace{-1em}
\end{figure}
\end{proof}

\bibliographystyle{ACM-Reference-Format}
\bibliography{biblio} 

\end{document}